\documentclass[graybox]{svmult}

\usepackage{mathptmx}
\usepackage{helvet}
\usepackage{courier}

\usepackage{enumerate,amssymb,amsmath,graphics,epsfig}
\usepackage{color}
\usepackage[nosepfour,warning,np,debug,autolanguage]{numprint}
%
%
\newcommand\RE{\mathbb{R}}

\renewcommand\I{\mathbb{I}}
\renewcommand\div{\mathop{\rm{div}}\nolimits}
\renewcommand\det{\mathop{\rm{det}}\nolimits}

\newcommand\Div{\mathop{\mathbf{Div}}\nolimits}
\newcommand\Grad{\mathop{\boldsymbol\nabla}\nolimits}

\newcommand\grad{\mathop\nabla\nolimits}
\newcommand\grads{\mathop{\nabla_s}\nolimits}
\renewcommand\D{\mathbf{D}}

\newcommand\Huod{H^1_{0,D}(\Omega)^d}
\newcommand\Ldo{L^2_0(\Omega)}
\newcommand\Hub{H^1(\B)^d}
\newcommand\Hubd{(H^1(\B)^d)'}
\newcommand\LdB{L^2(\B)}
\newcommand\LdO{L^2(\Omega)}

\newcommand\Oft{\Omega^f_t}
\newcommand\Ost{\Omega^s_t}
\newcommand\B{\mathcal B}

\renewcommand\u{\mathbf{u}}
\renewcommand\v{\mathbf{v}}
\renewcommand\d{\mathbf{d}}
\renewcommand\b{\mathbf{b}}
\renewcommand\c{\mathbf{c}}
\newcommand\f{\mathbf{f}}
\newcommand\n{\mathbf{n}}
\newcommand\w{\mathbf{w}}
\newcommand\x{\mathbf{x}}
\newcommand\z{\mathbf{z}}
\newcommand\s{\mathbf{s}}
\newcommand\X{\mathbf{X}}
\newcommand\Y{\mathbf{z}}

\newcommand\LL{\boldsymbol{\Lambda}}
\newcommand\ssigma{\boldsymbol\sigma}
\newcommand\llambda{\boldsymbol\lambda}
\newcommand\mmu{\boldsymbol\mu}
\newcommand\ttau{\boldsymbol\tau}
\newcommand\F{\mathbb{F}}
\newcommand\C{\mathbb{C}}
\renewcommand\P{\mathbb{P}}
\newcommand\Pe{\P^e_s}
\newcommand\ds{\mathrm{d}\s}
\newcommand\dx{\mathrm{d}\x}
\newcommand\da{\mathrm{da}}

\newcommand\dt{\Delta t}
\newcommand\ucX{\u(\X(\cdot,t),t)}
\newcommand\vcX{\v(\X(t))}

\newcommand\ucXn{\u^{n+1}(\X^n)}
\newcommand\vcXn{\v(\X^n)}
\newcommand\qcXn{q(\X^n)}
\newcommand\pcXn{p^{n+1}(\X^n)}
\newcommand\uhcXn{\u^{n+1}(\X^n)}

\newcommand\phcXn{p^{n+1}(\X^n)}

\newcommand\rhos{\rho_{s_0}}
\newcommand\dr{\delta_\rho}

\newcommand\dwt{\dot\w}
\newcommand{\Af}{A_f}
\newcommand{\As}{A_s}
\newcommand{\Mf}{M_f}
\newcommand{\Ms}{M_s}
\newcommand{\Bf}{B_f}
\newcommand{\Bs}{B_s}
\newcommand{\Cs}{C_s}
\newcommand{\Cf}{C_f}
\newcommand{\Vh}{\mathbf{V}_h}
\newcommand{\Qh}{Q_h}
\newcommand{\Sh}{\mathbf{S}_h}
\newcommand{\Lh}{\boldsymbol{\Lambda}_h}
%
%
\newtheorem{thm}{Theorem}



%
%
\usepackage{subfigure}
\usepackage{colortbl}
\usepackage{pgfplots}
\usepackage{pgfplotstable}
\usepackage{tikz}
\usepackage{tkz-euclide}
\usetkzobj{all}
\usetikzlibrary{decorations.pathreplacing,shapes.misc,calc,intersections,arrows,external}
\usepgfplotslibrary{external}
\tikzsetexternalprefix{./figures/}
\tikzexternalize

\usepackage{booktabs} 

\graphicspath{{./figures/}}

\pgfplotsset{compat=1.14}

\begin{document}

\title*{A distributed Lagrange formulation of the Finite
 Element Immersed Boundary Method for fluids interacting 
with compressible solids.}

\titlerunning{FEIBM --- compressible case}


\author{Daniele Boffi \and Lucia Gastaldi \and Luca Heltai}

\institute{Daniele Boffi \at Dipartimento di Matematica ``F. Casorati'',
Universit\`a di Pavia, Italy and Department of Mathematics and System
Analysis, Aalto University, Finland, \email{daniele.boffi@unipv.it},
\url{http://www-dimat.unipv.it/boffi/} \and Lucia Gastaldi \at DICATAM,
Universit\`a di Brescia, Italy, \email{lucia.gastaldi@unibs.it},
\url{http://lucia-gastaldi.unibs.it} \and Luca Heltai \at SISSA, Trieste,
Italy, \email{heltai@sissa.it}, \url{http://people.sissa.it/\~{}heltai}}


\maketitle


\abstract*{We present a distributed Lagrange multiplier formulation of the
  Finite Element Immersed Boundary Method to couple incompressible
  fluids with compressible solids.
  This is a generalization of the formulation presented in Heltai and
  Costanzo (2012), that offers a cleaner variational formulation,
  thanks to the introduction of distributed Lagrange multipliers, that
  acts as intermediary between the fluid and solid equations, keeping
  the two formulation mostly separated.
  Stability estimates and a brief numerical validation are presented.}

\abstract{We present a distributed Lagrange multiplier formulation of the
  Finite Element Immersed Boundary Method to couple incompressible
  fluids with compressible solids.
  This is a generalization of the formulation presented in Heltai and
  Costanzo (2012), that offers a cleaner variational formulation,
  thanks to the introduction of distributed Lagrange multipliers, that
  acts as intermediary between the fluid and solid equations, keeping
  the two formulation mostly separated.
  Stability estimates and a brief numerical validation are presented.}


\section{Introduction}
\label{se:intro}

Fluid-structure interaction (FSI) problems are everywhere in
engineering and biological applications, and are often too complex to
solve analytically.

Well established techniques, like the Arbitrary Lagrangian Eulerian
(ALE) framework~\cite{HughesLiuK.-1981-a}, enable
the numerical simulation of FSI problems by coupling computational
fluid dynamics (CFD) and computational structural dynamics (CSD),
through the introduction of a deformable fluid grid, whose movement at
the interface is driven by the coupling with the CSD simulation, while
the interior is deformed arbitrarily, according to some smooth
deformation operator.

Although this technique has reached a great level of robustness,
whenever changes of topologies are present in the physics of the
problem, or when freely floating objects (possibly rotating) are
considered, a deforming fluid grid that follows the solid may no
longer be a feasible solution strategy.

The Immersed Boundary Method (IBM), introduced by Peskin in the
seventies~\cite{Peskin-1977-a} to simulate the interaction
of blood flow with heart valves, addressed this issue by reformulating
the coupled FSI problem as a ``reinforced fluid'' problem, where the
CFD system is solved everywhere (including in the regions occupied by
the solid), and the presence of the solid is taken into account in the
fluid as a (singular) source term
(see~\cite{Peskin-2002-a} for a review).

In the original IBM, the body forces expressing the FSI are determined
by modeling the solid body as a network of elastic fibers with a
contractile element, where each point of the fiber acts as a singular
force field (a Dirac delta distribution) on the fluid.

Finite element variants of the IBM were first proposed, almost
simultaneously, by \cite{BoffiGastaldi-2003-a},
\cite{WangLiu-2004-a}, and
\cite{ZhangGerstenbergerWang-2004-a}. However,
only~\cite{BoffiGastaldi-2003-a} exploited the variational
definition of the Dirac delta distribution directly in the finite
element (FE) approximations. 

Such approximation was later generalized to thick hyper-elastic bodies
(as opposed to fibers)~\cite{BoffiGastaldiHeltaiPeskin-2008-a}, where
the constitutive behavior of the immersed solid is assumed to be
incompressible and visco-elastic with the viscous component of the
solid stress response being identical to that of the fluid.

In~\cite{HeltaiCostanzo-2012-a}, the authors present a formulation that
is applicable to problems with immersed bodies of general topological
and constitutive characteristics, without the use of Dirac delta
distributions, and with interpolation operators between the fluid and
the solid discrete spaces that guarantee semi-discrete stability
estimates and strong consistency. Such formulation has been
successfully used~\cite{RoyHeltaiCostanzo-2015-a} to match standard
benchmark tests~\cite{TurekHron-2006-a}.

In this work we show how the incompressible version of the FSI model
presented in~\cite{HeltaiCostanzo-2012-a} can be seen as a special case
of the Distributed Lagrange Multiplier method, introduced in
\cite{BoffiCavalliniGastaldi-2015-a}, and we present a novel
distributed Lagrange multiplier method that generalizes the
compressible model introduced in~\cite{HeltaiCostanzo-2012-a}.

We provide a general variational framework for Immersed Finite Element
Methods (IFEM) based on the distributed Lagrange multiplier
formulation that is suitable for general fluid structure interaction
problems.

\section{Setting of the problem}
Let $\Omega\subset\RE^d$, with $d=2,3$, be a fixed open bounded polyhedral
domain with Lipschitz boundary which is split into two time dependent
subdomains $\Oft$ and $\Ost$, representing the fluid and the solid regions,
respectively. Hence $\Omega$ is the interior of 
$\overline{\Omega}^f_t\cup\overline{\Omega}^s_t$ and we denote by 
$\Gamma_t=\overline{\Omega}^f_t\cap\overline{\Omega}^s_t$ the moving interface
between the fluid and the solid regions. For simplicity, we assume that
$\Gamma_t\cap\partial\Omega=\emptyset$.

\begin{figure}
  \label{fig:domain}
  \centering
  \tikzsetnextfilename{domain}
  \begin{tikzpicture}
    \node at (-5,1)       (B0){};     
    \node at (-3,1)       (B1){};
    \node at (-3,4)       (B2){};
    \node at (-5,4)       (B3){};
    \node at (-4.8,2.5) (B4){};
    
    \draw [fill=lightgray] plot [smooth cycle, tension=1]
    coordinates { (B0) (B1) (B2) (B3) (B4) };

    \draw (0,0) rectangle (5,5) node[anchor=south west](O) {$\Omega$};
    
    \node at (1,1) (G){};
    \node at (2,4) (N){};
    \node at (1.5,4.5) (Np){};
    
    \draw [fill=lightgray] plot [smooth cycle, tension=1]
    coordinates { (G) (3,1)  (3.5,2.3)  (N)};

    \node at (-4,3) (s) {};
    \node at (2,2) (X) {}; 

    \node at (4,4) {$\Oft$};
    \node at (2.2,3.5) {$\Ost$};
    \node [below left] at (G) {$\Gamma_t$};
    \node [below left] at (B0) {$\B$};
    \node [above] at (s) {$\s$};
    \node [above right] at (X) {$\X(\s,t)$};

    \draw  (s) node[fill,circle,inner sep=0pt,minimum size=3pt] {};
    \draw  (X) node[fill,circle,inner sep=0pt,minimum size=3pt] {};

    \path[->] (s) edge
    [bend left] node[above ] {} (X); 
    
    \node at (-1.5,4) {$\X:=\s + \w(\s,t)$};
    
  \end{tikzpicture}

  \caption{Geometrical configuration of the FSI problem}
\end{figure}
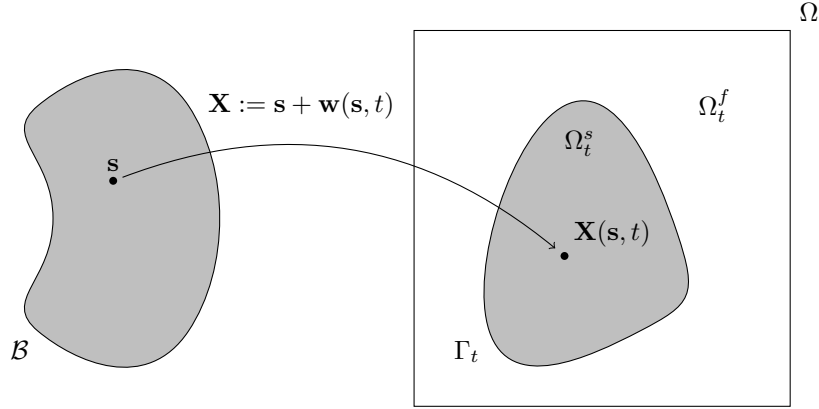

The current position of the solid $\Ost$ is the image of a reference domain
$\B$ through a mapping $\X:\B\to\Ost$. The displacement of the solid is
indicated by $\w$ and for any point $\x\in\Ost$ we have
$\x=\X(\s,t)=\X_0(\s)+\w(\s,t)$ for some $\s\in\B$, where $\X_0:\B\to\Omega_0^s$
denotes the mapping providing the initial configuration of $\Ost$.
For convenience, we assume that the reference domain $\B$ coincides with
$\Omega_0^s$ so that $\X(s,t)=\s+\w(\s,t)$ and $\B\subset\Omega$.
From the above definitions, $\F=\grads\X=\I+\grads\w$ stands for the
deformation gradient and $J=\det(\F)$ for its Jacobian.

We denote by $\u_f:\Omega\to\RE^d$ and $p_f:\Omega\to\RE$
the fluid velocity and pressure and 
assume that the solid velocity $\u_s$ is equal to the material velocity
of the solid, that is
\begin{equation}
\label{eq:constraint}
\u_s(\x,t)=\frac{\partial\X(\s,t)}{\partial t}\Big|_{\x=\X(\s,t)}
=\frac{\partial\w(\s,t)}{\partial t}\Big|_{\x=\X(\s,t)}.
\end{equation}
We indicate with generic symbols $\u, p, \rho$ the \emph{Eulerian}
fields, depending on $\x$ and $t$, which describe the velocity, pressure, and
density, respectively, of a material particle (be it solid or fluid).
By $\dot\u$ we denote the material derivative of $\u$, which
in Eulerian coordinates is expressed by
\begin{equation}
\label{eq:material}
\dot\u(\x,t)=\frac{\partial \u}{\partial t}(\x,t)+\u(\x,t)\cdot\Grad\u(\x,t).
\end{equation}
In the Lagrangian framework, the material derivative coincides with the partial
derivative with respect to time, so that
\[
\dot\w(\s,t)=\partial\w(\s,t)/\partial t.
\]
Continuum mechanics models are based on the conservation of three main
properties: linear momentum, angular momentum, and mass.

When expressed in Lagrangian coordinates, mass conservation is
guaranteed if the reference mass density $\rho_0$ is time
independent. If expressed in Eulerian coordinates, however, mass
conservation takes the form
\begin{equation}
  \label{eq:mass-conservation}
  \dot \rho +  \rho \div \u = 0 \quad \text{in }\Omega,
\end{equation}
and should be included in the system's equations.

Conservation of both momenta can be expressed, in \emph{Eulerian}
coordinates, as:
\begin{equation}
\label{eq:conservation-momenta}
\rho\dot\u =\div\ssigma+\rho\b \quad \text{in }\Omega,
\end{equation}
where $\rho$ is the mass density distribution, $\u$ the velocity, 
$\ssigma$ is the Cauchy stress tensor (its symmetry implies
that conservation of angular momenta is guaranteed by
equation~\eqref{eq:conservation-momenta}), and $\b$ describes the
external force density per unit mass acting on the system. Such a
description is common to all continuum mechanics models (see, for
example,~\cite{Gurtin-1982-a}). The equations for fluids and solids
are different according to their constitutive behavior, i.e.,
according to how $\ssigma$ relates to $\u, \w$, or $p$.

%
If the material is incompressible, it can be shown that $\div \u = 0$
everywhere, and the material derivative of the density is constantly
equal to zero (from equation~\eqref{eq:mass-conservation}). Notice
that this does not imply that $\rho$ is constant (neither in time nor
in space), and it is still in general necessary to include
equation~\eqref{eq:mass-conservation} in the system.

For incompressible materials, however, the volumetric part of the
stress tensor can be interpreted as a Lagrange multiplier associated
with the incompressibility constraints. For incompressible fluids, the
stress is decomposed into $\ssigma_f=-p_f\I+\nu_f\D\u_f$ where $\u_f$
is the fluid velocity, $p_f$ the pressure, $\nu_f>0$ is the viscosity
coefficient and $\D\u_f=(1/2)\left(\Grad\u_f+(\Grad\u_f)^\top\right)$.

Hence the equations describing the fluid motion are the well-known
Navier--Stokes equations, that is:
\begin{equation}
\label{eq:fluideq}
\aligned
&\rho_f\dot\u_f
-\div(\nu_f\D\u_f)+\grad p_f=\rho_f\b &&\text{in }\Oft\\
&\div\u_f=0&&\text{in }\Oft,
\endaligned
\end{equation}
where we assumed that $\rho_f$ is constant throughout $\Oft$.

As far as the solid is concerned, we assume that it is composed by a
viscous elastic material so that the Cauchy stress tensor can be
decomposed into the sum of two contributions: a viscous part and a
pure elastic part, as follows
\begin{equation}
\label{eq:solidsigma}
\ssigma_s=\ssigma_s^v+\ssigma_s^e=\nu_s\D\u_s+\ssigma_s^e.
\end{equation}
Here $\u_s$ is the solid velocity, $\nu_s\ge0$ is the solid viscosity
coefficient and $\ssigma_s^e$ denotes the elastic part of the solid
Cauchy stress tensor. We assume this elastic part to behave
hyper-elastically, i.e., we assume that there exists an elastic
potential energy density $W(\F)$ such that $W(\mathbf{R} \F) = W(\F)$
for any rotation $\mathbf{R}$, that represents the amount of elastic
energy stored in the current solid configuration, and that depends
only on its deformation gradient $\F = \grads \X$.

When expressed in Lagrangian coordinates, a possible measure for the
elastic part of the stress is the so called first Piola--Kirchhoff
stress tensor, defined as the Fr\'echet derivative of $W$ w.r.t.\ to
$\F$, i.e.:
\begin{equation}
  \label{eq:first-piola-kirchhoff}
  \P_s^e := \frac{\partial W}{\partial \F}.
\end{equation}

The first Piola--Kirchhoff stress tensor allows one to express the
conservation of linear momentum in Lagrangian coordinates as
\begin{equation}
  \label{eq:conservation-momentum-lagrangian}
  \rhos \ddot \w = \Div(\P) + \rhos \mathbf{B} \quad \text{ in } \B,
\end{equation}
where, similarly to its Eulerian counterpart, we assume that $\P$ is
decomposed in an \emph{additive} way into its viscous part $\P_s^v$
and into its elastic part $\P_s^e$, defined in
equation~\eqref{eq:first-piola-kirchhoff}.

For any portion $\mathcal{P} \subset \B$ of the solid (with outer
normal $\mathbf{N}$) deformed to $\mathcal{P}_t$ (with outer normal
$\mathbf{n}$), the following relation between $\P$ and $\ssigma$ holds:
\begin{equation}
  \label{eq:integral-relation-stresses}
  \int_{\mathcal{P}} \P \mathbf{N} \, \d \Gamma_s =
  \int_{\mathcal{P}_t} \ssigma \n \,\d \Gamma_x, \qquad \forall
  \mathcal{P} \subset \B, \qquad \mathcal{P}_t := \X(\mathcal{P}, t),
\end{equation}
that is, we can express pointwise the \emph{viscous} part of the solid
stress in Lagrangian coordinates by rewriting the first
Piola--Kirchhoff stress $\P^v_s$ in terms of
$\ssigma_s^v := \nu_s \D\u_s$, and the \emph{hyper-elastic} part of
the solid stress in Eulerian coordinates by expressing the Cauchy
stress $\ssigma_s^e$ in terms of $\P_s^e := \partial W/\partial \F$:
\begin{equation}
\label{eq:PiolaKirchhoff}
\aligned
\P^v_s(\s,t) &
:=J^{\phantom{-1}}\ssigma_s^v(\x,t)\F^{-\top}(\s,t)&&\quad\text{for }\x=\X(\s,t) \\
\ssigma^e_s(\x,t) &
:=J^{-1}\P_s^e(\s,t)\ \F^{\top }(\s,t)\phantom{-}&&\quad\text{for }\x=\X(\s,t).
\endaligned
\end{equation}

With these definitions, the conservation of linear momentum for the
solid equation can be expressed either in Lagrangian coordinates as
\begin{equation}
  \label{eq:solideq-lagrangian}
  \rhos \ddot \w = \Div \P^v_s+\Div \frac{\partial W}{\partial \F} +
  \rhos\mathbf{B} \quad \text{ in } \B 
\end{equation}
or in Eulerian coordinates as
\begin{equation}
\label{eq:solideq}
\rho_s\dot\u_s-\div(\nu_s\D\u_s)-\div\ssigma_s^e=\rho_s\b\quad \text{in
}\Ost.
\end{equation}

Notice that the conservation of mass for the solid equation is a
simple kinematic identity that derives from the fact that $\rhos$ does
not depend on time, i.e., we have
\begin{equation}
  \label{eq:solid-mass-conservation-eulerian}
 \frac{\dot \rho_s}{\rho_s} +  \div \u_s = 0  \qquad \text{in
 }\Ost,
\end{equation}
or, equivalently,
\begin{equation}
\label{eq:solidmass}
\rho_s(\x,t)=\rhos(\s)/J(\s,t)\quad\text{for }\x=\X(\s,t),
\end{equation}
that is:
 \qquad
 \begin{equation}
   \label{eq:alternative-solidmass}
   \div \u_s(\x,t) = \frac{\dot J}{J}(\s,t) \quad
   \quad\text{for }\x=\X(\s,t).
\end{equation}

%
%
%

The equations in the solid and in the fluid are coupled through interface
conditions along $\Gamma_t$, which enforce the continuity of the velocity,
corresponding to the no-slip condition between solid and fluid, and the balance
of the normal stress:
\begin{equation}
\label{eq:interface}
\aligned
&\u_f=\u_s &&\text{on }\Gamma_t\\
&\ssigma_f\n_f+\ssigma_s\n_s=0&&\text{on }\Gamma_t,
\endaligned
\end{equation}
where $\n_f$ and $\n_s$ denote the outward unit normal vector to $\Oft$ and
$\Ost$, respectively.

The system is complemented with initial and boundary conditions. The boundary
$\partial\Omega$ is split into two parts
$\partial\Omega_D$ and $\partial\Omega_N$, where Dirichlet and Neumann
conditions are imposed, respectively, with
$\partial\Omega_D\cap\partial\Omega_N=\emptyset$. Since we assumed that
$\partial\Omega\cap\Gamma_t=\emptyset$, the initial and boundary conditions are
given by:  
\begin{equation}
\label{eq:initialandbc}
\aligned
&\u_f(0)=\u_{f0}&&\text{in }\Omega_0^f\\
&\u_s(0)=\u_{s0}&&\text{in }\Omega_0^s\\
&\X(0)=\X_0&&\text{in }\B\\
&\u_f=\u_g&&\text{on }\partial\Omega_D\\
&(\nu_f\D\u_f-p_f\I)\n_f=\ttau_g&&\text{on }\partial\Omega_N.
\endaligned
\end{equation}
In the following, we shall consider $\u_g=0$ on $\partial\Omega_D$,
for simplicity, and we define the space $\Huod$ as the space of
functions in $H^1(\Omega)^d$ such that their trace on
$\partial\Omega_D$ is zero.

By multiplying by $\v\in\Huod$ the first equation in~\eqref{eq:fluideq}
and~\eqref{eq:solideq}, integrating by part and using the second
interface condition in~\eqref{eq:interface}, we arrive at the
following weak form of the fluid-structure interaction problem: find
$\u_f$, $p_f$, $\u_s$, and $\w$ such that~\eqref{eq:constraint},
\eqref{eq:interface}$_1$ and~\eqref{eq:initialandbc} are satisfied and it holds:
\begin{equation}
\label{eq:firstweak}
\aligned
\int_{\Oft}&\rho_f(\dot\u_f-\b)\v\,\dx+\int_{\Oft}\nu_f\D\u_f:\D\v\,\dx
-\int_{\Oft} p_f\div\v\,\dx\\
&+\int_{\Ost}\rho_s(\dot\u_s-\b)\v\,\dx+\int_{\Ost}\nu_s\D\u_s:\D\v\,\dx
+\int_{\Ost} \ssigma_s^e:\D\v\,\dx\\
&\qquad\qquad=\int_{\partial\Omega_N}\ttau_g\cdot\v\,\da\qquad\forall\v\in\Huod\\
\int_{\Oft}&\div\u_f q_f\,\dx=0\qquad\qquad\qquad\forall q_f\in L^2(\Oft)
\endaligned
\end{equation}
where the notation $\dot\u$ indicates the material derivative with
respect to time. 
Using a \emph{fictitious domain} approach, we transform
problem~\eqref{eq:firstweak} by introducing the following new unknowns. 
Thanks to the continuity condition for fluid and solid velocity, we
define $\u\in\Huod$ and $p \in \LdO$ as 
\begin{equation}
\label{eq:ufictitious}
\u=\left\{
\begin{array}{ll}
\u_f &\text{in }\Oft\\
\u_s &\text{in }\Ost.
\end{array}
\right., \qquad p =\left\{
\begin{array}{ll}
p_f &\text{in }\Oft\\
p_s = 0 &\text{in }\Ost.
\end{array}
\right..
\end{equation} 
Notice that the pressure field does not have any physical meaning in
the solid, and it is (weakly) imposed to be zero.

Then we can write:
\begin{equation}
\label{eq:momentum}
\aligned
\int_\Omega&\rho_f\dot\u\v\,\dx+\int_\Omega\nu_f\D(\u):\D(\v)\,\dx
-\int_\Omega p\div\v\,\dx 
+\int_{\Ost}(\rho_s-\rho_f)\dot\u\v\,\dx \\
&+\int_{\Ost}(\nu_s-\nu_f)\D\u:\D\v\,\dx
+\int_{\Ost} \ssigma_s^e:\D\v\,\dx+\int_{\Ost} p\div\v\,\dx\\
&\qquad\qquad=\int_\Omega\f\v\,\dx +
\int_{\partial\Omega_N}\ttau_g\cdot\v\,\da
&&\hspace{-1cm}\forall\v\in\Huod\\
\int_\Omega&\div\u\, q\,\dx-\int_{\Ost}\div\u\, q\,\dx +
\int_{\Ost} \frac1\kappa p q\,\dx=0&&
\hspace{-1cm}\forall q\in \LdO
\endaligned
\end{equation}
where
\[
\f=\left\{\begin{array}{ll}
\rho_f\b &\text{in }\Oft\\
\rho_s\b &\text{in }\Ost
\end{array}
\right.,
\]
and $\kappa$ plays the role of a bulk modulus constant.

In the solid the Lagrangian framework should be preferred, hence we transform
the integrals over $\Ost$ into integrals on the reference domain $\B$.
Recalling~\eqref{eq:constraint} and~\eqref{eq:solidmass}
the equations in~\eqref{eq:momentum} are rewritten in the following form:
\begin{equation}
\label{eq:momentum2}
\aligned
\int_\Omega&\rho_f\dot\u(t)\v\,\dx+\int_\Omega\nu_f\D\u(t):\D\v\,\dx
-\int_\Omega p\div\v\,\dx\\
&
+\int_{\B}(\rhos-\rho_fJ)\ddot\w(t)\v(\X(\s,t))\,\ds
+V\Big(\dot\w(t),\v(\X(\s,t))\Big)\\
&+\int_{\B} \Pe(t):\Grad_s\v(\X(\s,t))\,\ds
+\int_{\B} Jp(\X(\s,t),t)\F^{-\top}:\nabla_s\v(\X(\s,t))\,\ds\\
&\qquad\qquad=\int_\Omega\f(t)\v\,\dx+
\int_{\partial\Omega_N}\ttau_g\cdot\v\,\da
&&\hspace{-1cm}\forall\v\in\Huod\\
\int_\Omega&\div\u(t)\, q\,\dx-
\int_{\B} J\,q(\X(\s,t))\F^{-\top}:\Grad_s\dot\w(t)\, \ds\\
&\qquad\qquad+\int_\B\frac1\kappa p(t) qJ\,\dx=0
&&\hspace{-1cm}\forall q\in \LdO
\endaligned
\end{equation}
where, for all $\X,\Y\in\Hub$
\begin{equation}
\label{eq:defV}
V(\X,\Y)=
\frac{\nu_s-\nu_f}4
\int_\B(\Grad_s\X\F^{-1}+\F^{-\top}\Grad_s\X^\top):
(\Grad_s\Y\F^{-1}+\F^{-\top}\Grad_s\Y^\top)J\,\ds.
\end{equation}

Since $\X(t):\B\to\Ost$ is one to one and belongs to $W^{1,\infty}(\B)^d$, 
$\Y=\vcX$ is an arbitrary element of $\Hub$ when $\v$ varies in $\Huod$.

Let $\LL$ be a functional space to be defined later on and 
$\c:\LL\times\Hub\to\RE$ a bilinear form such that 
\begin{equation}
\label{eq:defc}
\aligned
&\c \text{ is continuous on } \LL\times\Hub\\
&\c(\mmu,\z)=0 \text{ for all } \mmu\in\LL \text{ implies } \z=0.
\endaligned
\end{equation}
For example, we can take $\LL$ as the dual space of $\Hub$ and define $\c$ as
the duality pairing between $\Hub$ and $\Hubd$, that is:
\begin{equation}
\label{eq:defc1}
\c(\mmu,\Y)=\langle\mmu,\Y\rangle\quad\forall\mmu\in\Hubd,\ \Y\in\Hub.
\end{equation}
Alternatively, one can set $\LL=\Hub$ and define
\begin{equation}
\label{eq:defc2}
\c(\mmu,\Y)=\left(\Grad_s\mmu,\Grad_s\Y\right)_\B+\left(\mmu,\Y\right)_\B
\quad\forall\mmu,\ \Y\in\Hub.
\end{equation}
With the above definition for $\c$, we introduce an unknown $\llambda\in\LL$
such that 
\begin{equation}
\label{eq:Lagrange}
\aligned
\c(\llambda,\Y)&=\int_{\B}(\rhos-\rho_fJ)\ddot\w(t)\Y)\,\ds
+V(\dot\w(t),\v(\X(\s,t)))\\
&+\int_{\B} \P(\F(t)):\Grad_s\Y\,\ds
+\int_{\B} Jp(\X(\s,t),t)\F^{-\top}:\Grad_s\Y\,\ds\qquad\forall\Y\in\Hub.
\endaligned
\end{equation}
Hence we can write the following problem.
\begin{problem}
\label{pb:1}
Let us assume that
$\u_0\in\Huod$, $\X_0\in W^{1,\infty}(\B)^d$, and that for all $t\in[0,T]$ 
$\ttau_g(t)\in H^{-1/2}(\partial\Omega_N)$, and $\f(t)\in\LdO$. 
For almost every $t\in]0,T]$, find
$(\u(t),p(t))\in H^1(\Omega)\times\LdO$, $\w(t)\in\Hub$,
and $\llambda(t)\in\LL$ such that it holds
\begin{subequations}
\begin{alignat}{2}
  &\rho_f(\dot\u(t),\v)+a(\u(t),\v)-(\div\v,p(t)) \ \notag&&\\
  &\quad+\c(\llambda(t),\vcX)
=(\f(t),\v)+(\ttau_g(t),\v)_{\partial\Omega_N}
   &&\ \forall\v\in\Huod
     \label{eq:NS1_DLM}\\
  &-(\div\u(t),q)+(J\,q(\X(\s,t))\F^{-\top},\Grad_s\dot\w(t))_\B
\notag&&\\
&\quad-\frac1\kappa (Jp(t),q)_\B=0&&\ \forall q\in\LdO
     \label{eq:NS2_DLM}\\
  &\left(\dr\ddot\w(t),\Y\right)_{\B}
+(\Pe(t),\Grad_s\Y)_{\B}+V(\dot\w(t),\Y)
\ \notag&&\\
&\quad+(Jp(\X(\s,t),t)\F^{-\top},\Grad_s\Y)_\B-\c(\llambda(t),\Y)=0&&
\ \forall\Y\in\Hub
     \label{eq:solid_DLM}\\
  &\c\left(\mmu,\ucX-\dwt(t)\right)
   =0 &&\ \forall\mmu\in \LL
     \label{eq:vincolo1}\\
& \X(\s,t)=\s+\w(\s,t)\quad\text{\rm for }\s\in\B\\
&\u(0)=\u_0\quad \mbox{\rm in }\Omega,\qquad\X(0)=\X_0\quad \mbox{\rm in }\B.
     \label{eq:ci_DLM}
\end{alignat}
\end{subequations}
\end{problem}
Here $\dr=\rhos-\rho_fJ$, $(\cdot,\cdot)$ and $(\cdot,\cdot)_\B$ stand for
the scalar product in $\LdO$ and $\LdB$, respectively, and
\[
\aligned
&a(\u,\v)=(\nu_f\D\u,\D\v) \quad\forall\u,\v\in\Huod\\
&(\ttau_g,\v)_{\partial\Omega_N}=\int_{\partial\Omega_N}\ttau_g\cdot\v\,\da\\
&(\P,\mathbb{Q})_\B=\int_\B \P:\mathbb{Q}\,\ds\quad \text{for }\P,\mathbb{Q}
\text{ tensors in } \LdB.
\endaligned
\]
\begin{proposition}
\label{pr:press_s}
Let $(\u,p,\w,\llambda)$ be a solution of
Problem~\ref{pb:1}. We have that $p(t)=0$ in $\Ost$ for $t\in]0,T]$ and
$(\div\u,q)_{\Oft}=0$ for all $q\in L^2(\Oft)$.
\end{proposition}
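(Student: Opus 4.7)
The plan is to use the kinematic constraint \eqref{eq:vincolo1} to recast the middle term of \eqref{eq:NS2_DLM} so that its contribution over $\Ost$ cancels against that of $-(\div\u,q)$, and then to identify $p$ on $\Ost$ and $\div\u$ on $\Oft$ by testing against carefully chosen elements of $\LdO$.

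First, I would apply the non-degeneracy property of $\c$ stated in \eqref{eq:defc} to \eqref{eq:vincolo1}, obtaining $\ucX = \dwt(t)$ pointwise on $\B$. Differentiating this identity by the chain rule yields $(\Grad\u)(\X(\s,t))\,\F(\s,t)=\Grad_s\dwt(\s,t)$, and taking the trace gives $\F^{-\top}:\Grad_s\dwt = \div\u_s$ at $\x=\X(\s,t)$. Pulling the middle term of \eqref{eq:NS2_DLM} back to $\Ost$ through the change of variables $\x = \X(\s,t)$ with Jacobian $J$ produces $\int_{\Ost}\div\u\,q\,\dx$, which cancels the $\Ost$-part of $-(\div\u,q)$. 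Equation \eqref{eq:NS2_DLM} therefore reduces to
\[
\int_{\Oft}\div\u\,q\,\dx + \frac{1}{\kappa}\int_{\Ost} p\,q\,\dx = 0 \qquad \forall q\in\LdO.
\]

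The second step is to observe that the piecewise function $g:=\chi_{\Oft}\div\u+\kappa^{-1}\chi_{\Ost}p$ is orthogonal in $L^2(\Omega)$ to every $q\in\LdO$, hence $g$ must be constant on $\Omega$. Taking $q$ supported in $\Ost$ with zero mean forces $p=:c$ constant on $\Ost$; taking $q$ supported in $\Oft$ with zero mean forces $\div\u=:d$ constant on $\Oft$; and a mixed test such as $q=|\Ost|\chi_{\Oft}-|\Oft|\chi_{\Ost}$ ties the two constants together as $\kappa\,d=c$.

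The main obstacle I foresee is the final identification $c=d=0$: the normalization $p\in L^2_0(\Omega)$ only controls the mean of $p$ over $\Omega$, not separately over $\Ost$. I expect the closure of the argument to come from the structural role of $p$ on $\Ost$ as a Lagrange multiplier enforcing the convention $p_s\equiv 0$ set in \eqref{eq:ufictitious}, possibly combined with a test from \eqref{eq:NS1_DLM} exploiting the inf--sup surjectivity of $\div:\Huod\to\LdO$ to pin the constant to zero; once $c=0$ is obtained, the relation $\kappa d=c$ immediately gives $d=0$ and hence both assertions of the proposition.
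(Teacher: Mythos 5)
Your first step is exactly the paper's: from \eqref{eq:vincolo1} and the non-degeneracy in \eqref{eq:defc} you get $\ucX=\dwt(t)$ on $\B$, the chain rule gives $J\,\F^{-\top}:\Grad_s\dwt = J\div\u$ at $\x=\X(\s,t)$, and pulling the two $\B$-integrals of \eqref{eq:NS2_DLM} back to $\Ost$ reduces that equation to
\begin{equation*}
\int_{\Oft}\div\u\,q\,\dx+\frac1\kappa\int_{\Ost}p\,q\,\dx=0\qquad\forall q\in\LdO .
\end{equation*}
Up to here everything is correct and coincides with the paper's computation.

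The gap is in what follows, and it stems from a misreading of the test space. In Problem~\ref{pb:1} the pressure and its test functions live in $\LdO=L^2(\Omega)$, \emph{not} in the zero-mean subspace $\Ldo=L^2_0(\Omega)$ (the latter only appears in the semi-discrete Problem~\ref{pb:semidiscrete}); this is consistent with the presence of a Neumann portion $\partial\Omega_N$, which removes the need for any normalization of $p$. Since the identity above holds against \emph{all} of $L^2(\Omega)$, the function $\chi_{\Oft}\div\u+\kappa^{-1}\chi_{\Ost}p$ is not merely constant but zero: taking $q=p\,\chi_{\Ost}$ gives $\int_{\Ost}p^2/\kappa\,\dx=0$, hence $p=0$ in $\Ost$, and then $q$ supported in $\Oft$ gives $(\div\u,q)_{\Oft}=0$. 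This is the paper's argument, and it closes in two lines. Your detour through ``$g$ is constant'' creates an obstacle that does not exist, and — more importantly — you do not actually overcome it: the final paragraph only conjectures that the constant can be pinned to zero via the Lagrange-multiplier role of $p_s$ or an inf--sup argument on \eqref{eq:NS1_DLM}, without carrying out either. As written, the proof is therefore incomplete; had the test space really been $L^2_0(\Omega)$, the missing identification $c=d=0$ would be the whole content of the proposition, and ``I expect the closure to come from\dots'' does not supply it.
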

\begin{proof}
The constraint
in~\eqref{eq:vincolo1} together with~\eqref{eq:defc} implies that
$\u(t)=\dot\w(t)$
in $\Ost$. Using this fact and changing variable in the last two integrals
in~\eqref{eq:NS2_DLM}, we arrive at
\[
-(\div\u(t),q)+\int_{\Ost} \div\u(t) q\,\dx
-\int_{\Ost}\frac1\kappa p(t) q\,\dx=0\ \forall q\in\LdO.
\]
Taking $q=p(t)$ in $\Ost$ and vanishing in $\Oft$, we end up with
\[
\int_{\Ost}\frac1\kappa p^2(t)\,\dx=0,
\]
which implies that $p(t)=0$ in $\Ost$.
Taking $q=0$ in $\Oft$ in~\eqref{eq:NS2_DLM} $\Ost$ we obtain that the velocity
is divergence free in the fluid domain. 
\end{proof}

The following theorem gives the estimate of the energy.
\begin{thm} Let $(\u,p,\w,\lambda)$ be the 
solution of Problem~\ref{pb:1}, then the following estimate holds true
\begin{equation}
\aligned
&\frac12\frac d{dt}\|\rho^{1/2}\u(t)\|^2_{0,\Omega}
+\|\nu^{1/2}\D\u(t)\|^2_{0,\Omega}
+\frac d{dt}\int_B W(\F(t))\,\ds\\
&\qquad \le C\left(\|\f(t)\|^2_{0,\Omega}+
\|\ttau_g(t)\|_{H^{-1/2}(\partial\Omega_N)}\right).
\endaligned
\end{equation}
Here 
\[
\rho=\left\{
\begin{array}{ll}
\rho_f &\quad\text{\rm in }\Oft\\
\rho_s(\x,t)&\quad\text{\rm in }\Ost
\end{array}
\right.,
\qquad
\nu=\left\{
\begin{array}{ll}
\nu_f &\quad\text{\rm in }\Oft\\
\nu_s &\quad\text{\rm in }\Ost
\end{array}
\right..
\]
\end{thm}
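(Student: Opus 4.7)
The plan is to test each equation of Problem~\ref{pb:1} with the natural ``self'' choice and add them up, exploiting the distributed Lagrange multiplier constraint to kill the cross terms. Specifically, I would take $\v=\u(t)$ in~\eqref{eq:NS1_DLM}, $q=p(t)$ in~\eqref{eq:NS2_DLM}, $\Y=\dot\w(t)$ in~\eqref{eq:solid_DLM} and $\mmu=\llambda(t)$ in~\eqref{eq:vincolo1}. The multiplier equation~\eqref{eq:vincolo1} with these choices gives $\c(\llambda,\ucX)=\c(\llambda,\dot\w)$, so the two occurrences of $\c(\llambda,\cdot)$ in~\eqref{eq:NS1_DLM} and~\eqref{eq:solid_DLM} cancel. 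Likewise, subtracting~\eqref{eq:NS2_DLM} (with $q=p$) from the sum of~\eqref{eq:NS1_DLM} and~\eqref{eq:solid_DLM} the two pressure contributions $(Jp(\X)\F^{-\top},\Grad_s\dot\w)_\B$ cancel against $-(\div\u,p)$, leaving on the left the non-negative term $\tfrac{1}{\kappa}(Jp,p)_\B$ coming from the bulk modulus penalization, which can simply be dropped (or kept) in the final estimate.

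The main obstacle is to recognize the two inertial contributions as a single time derivative. Here the compressibility of the solid plays a delicate role: one has $\div\u=0$ only in $\Oft$, while in $\Ost$ the kinematic identity~\eqref{eq:alternative-solidmass} gives $\div\u=\dot J/J$. I would treat $\rho_f(\dot\u,\u)$ by writing the material derivative in Eulerian form and integrating the convective term by parts, obtaining $\tfrac{\rho_f}{2}\tfrac{d}{dt}\|\u\|^2_{0,\Omega}-\tfrac{\rho_f}{2}\int_\B \dot J|\dot\w|^2\ds$ after using $\u\circ\X=\dot\w$, $\dx=J\ds$ and the boundary condition on $\partial\Omega_D$ (assuming $\ttau_g$ carries the flux through $\partial\Omega_N$ into the data term). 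The solid inertial term $(\dr\ddot\w,\dot\w)_\B$ with $\dr=\rhos-\rho_f J$ expands via Leibniz into $\tfrac12\tfrac{d}{dt}\int_\B\rhos|\dot\w|^2\ds-\tfrac{\rho_f}{2}\tfrac{d}{dt}\int_\B J|\dot\w|^2\ds+\tfrac{\rho_f}{2}\int_\B\dot J|\dot\w|^2\ds$. The $\pm\tfrac{\rho_f}{2}\int_\B\dot J|\dot\w|^2\ds$ contributions then cancel, and using $\rhos=\rho_s J$ together with the change of variable $\x=\X(\s,t)$ the remaining three terms collapse precisely into $\tfrac12\tfrac{d}{dt}\|\rho^{1/2}\u\|^2_{0,\Omega}$.

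For the viscous and elastic contributions the computation is algebraic. I would use $\Grad_s\dot\w=\Grad\u_s\,\F$ (which follows by differentiating the constraint $\dot\w=\u\circ\X$) to get $\Grad_s\dot\w\,\F^{-1}+\F^{-\top}\Grad_s\dot\w^\top=2\D\u_s$, so that $V(\dot\w,\dot\w)=(\nu_s-\nu_f)\|\D\u\|^2_{0,\Ost}$ after a change of variable; combined with $a(\u,\u)=\nu_f\|\D\u\|^2_{0,\Omega}$ this yields $\|\nu^{1/2}\D\u\|^2_{0,\Omega}$. The elastic term is handled by the chain rule $(\Pe,\Grad_s\dot\w)_\B=\bigl(\tfrac{\partial W}{\partial\F},\dot\F\bigr)_\B=\tfrac{d}{dt}\int_\B W(\F)\ds$. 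Collecting everything, the left-hand side of the desired estimate appears, while the right-hand side is bounded via Cauchy--Schwarz, the trace inequality, Korn's inequality (to control $\|\v\|_{H^1(\Omega)}$ by $\|\D\v\|_{L^2}$ on $\Huod$) and Young's inequality, moving the $\epsilon\|\D\u\|^2$ term to the left to be absorbed in the viscous dissipation.
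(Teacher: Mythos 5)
Your proposal is correct and follows essentially the same route as the paper: test with $\v=\u$, $q=\pm p$, $\Y=\dot\w$, $\mmu=\pm\llambda$, cancel the multiplier and pressure couplings, recombine the fluid convective term with the Leibniz expansion of $(\dr\ddot\w,\dot\w)_\B$ so that the $\pm\tfrac{\rho_f}{2}\int_\B\dot J|\dot\w|^2\,\ds$ contributions cancel and the inertial terms collapse into $\tfrac12\tfrac{d}{dt}\|\rho^{1/2}\u\|^2_{0,\Omega}$, and treat the elastic term by the chain rule. The sign conventions for $q$ and $\mmu$ and the handling of the $\tfrac1\kappa(Jp,p)_\B$ term (dropped as non-negative versus identified as zero via Proposition~\ref{pr:press_s}) are immaterial differences.
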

\begin{proof}
We take the following test functions in the equations listed in
Problem~\ref{pb:1}: $\v=\u(t)$, $q=-p(t)$, $\Y=\partial\w(t)/\partial t$,
and $\mmu=-\llambda(t)$. Summing up all the
equations, and taking into account Proposition~\ref{pr:press_s} and
the constraint in~\eqref{eq:vincolo1} together with~\eqref{eq:defc}, we have
\begin{equation}
\label{eq:p1}
\aligned
\rho_f&\int_\Omega \dot\u(t)\cdot\u(t)\,\dx+\nu_f\|\D\u(t)\|^2_{0,\Omega}
+(\nu_s-\nu_f)\|\D\u\|_{0,\Ost}^2\\
&\qquad+\frac12 \int_B\dr\frac{\partial}{\partial t}(\dwt(t))^2\,\ds
+\Big(\Pe(t),\Grad_s\dwt(t)\Big)_\B\\
&\qquad =(\f(t),\u(t))+
(\ttau_g(t),\u(t))_{\partial\Omega_N}.
\endaligned
\end{equation}
Using again the constraint~\eqref{eq:vincolo1}, we can deal with the
first integral on the second line as follows:
\[
\aligned
\frac12 &\int_B\dr\frac{\partial}{\partial t}(\dwt)^2\,\ds=
\frac12 \int_B\rhos\frac{\partial}{\partial t}(\dwt)^2\,\ds-
\frac12 \int_B\rho_fJ\frac{\partial}{\partial t}(\dwt)^2\,\ds\\
&=\frac12\frac d{dt}\int_B\rhos(\dwt)^2\,\ds-
\frac12\frac d{dt}\int_\B\rho_f J\dwt\,\ds
+\frac12\int_B\rho_f\frac{\partial J}{\partial t}(\dwt)^2\,\ds \\
&=\frac12\frac d{dt}\int_{\Ost}(\rho_s(\x,t)-\rho_f)\u^2(\x,t)\,\dx
+\frac12\int_{\Ost}\rho_f(\div\u)\u^2(\x,t)\,\dx.
\endaligned
\]
We add this relation to the first integral in~\eqref{eq:p1}, and we take into
account the definition of the material derivative; hence we obtain after
integration by parts
\begin{equation}
\label{eq:p2}
\aligned
\rho_f&\int_\Omega \dot\u(t)\cdot\u(t)\,\dx+\frac12
\int_B\dr\frac{\partial}{\partial t}(\dwt(t))^2\,\ds\\
&=\frac{\rho_f}2
\int_\Omega\Big(\frac{\partial\u^2(t)}{\partial t}
+\u(t)\cdot\Grad\u(t)^2\Big)\,\dx
+\frac12\frac d{dt}\int_{\Ost}(\rho_s(\x,t)-\rho_f)\u^2(t)\,\dx\\
&\qquad\qquad+\frac12\int_{\Ost}\rho_f(\div\u(t))\u^2(t)\,\dx\\
&=\frac{\rho_f}2\frac d{dt}\int_\Omega\u^2(t)\,\dx
-\frac{\rho_f}2\int_\Omega(\div\u(t))\u^2(t)\,\dx\\
&\quad+\frac12\frac d{dt}\int_{\Ost}(\rho_s(\x,t)-\rho_f)\u^2(t)\,\dx
+\frac12\int_{\Ost}\rho_f(\div\u(t))\u^2(t)\,\dx\\
&=\frac12\frac d{dt}\int_\Omega\rho\u^2(t)\,\dx
\endaligned
\end{equation} 
Notice that the quantity $1/2\rho\u^2$ represents the kinetic energy density
per unit volume.

From the definition of the deformation gradient, we deduce that
\[
\aligned
&\Big(\Pe(t),\Grad_s\dwt(t)\Big)_\B=
\int_\B\Pe(t):\frac{\partial\F}{\partial t}(t)\,\ds=
\int_\B\frac{\partial W(\F(t))}{\partial\F}:\frac{\partial\F(t)}{\partial t}\,\ds
\\
&\quad =\int_B \frac{\partial W(\F(t))}{\partial t}\,\ds
=\frac d{dt}\int_\B W(\F(t))\,\ds.
\endaligned
\]
The integral on the right hand side represents the elastic energy of the solid.

Putting together these expressions with~\eqref{eq:p1} we obtain the desired
stability estimate.
\end{proof}
\section{Time discretization}
For an integer $N$, let $\dt=T/N$ be the time step, and $t_n=n\dt$ for
$n=0,\dots,N$. We discretize the time derivatives with backward finite
differences and use the following notation:
\[
\partial_t\u^{n+1}=\frac{\u^{n+1}-\u^n}{\dt},
\quad\partial_{tt}\u^{n+1}=\frac{\u^{n+1}-2\u^{n}+\u^{n-1}}{\dt^2}.
\]
By linearization of the nonlinear terms, we arrive at the following
semi-discrete problem:
\begin{problem} 
\label{pb:semidiscrete}
For $n=1,\dots,N$, find $(\u^n,p^n)\in\Huod\times\Ldo$,$\w^n\in\Hub$,
and $\llambda^n\in\LL$ such that it holds
\begin{subequations}
\begin{alignat}{2}
  &\rho_f\Big(\partial_t\u^{n+1},\v)+b(\u^n,\u^{n+1},\v)
+a(\u^{n+1},\v)\ \notag&&\\
  &\quad-(\div\v,p^{n+1})+\c(\llambda^{n+1},\vcXn)\notag&&\\
   &\quad=(\f^{n+1},\v)+(\ttau_g^{n+1},\v)_{\partial\Omega_N}
   &&\ \forall\v\in\Huod
     \label{eq:NS1_sd}\\
  &-(\div\u^{n+1},q)
+(J^n\,\qcXn(\F^n)^{-\top},\Grad_s\partial_t\w^{n+1})_\B
\notag&&\\
&\quad-\frac1\kappa (J^np^{n+1}, q)_\B=0&&\ \forall q\in\LdO
     \label{eq:NS2_sd}\\
  &\left(\dr^n\partial_{tt}\w^{n+1},\Y\right)_{\B}
+({\Pe}^{n+1},\Grad_s\Y)_{\B}+V_n(\partial_t\w^{n+1},\Y)
\ \notag&&\\
&\quad
+(J^n\pcXn(\F^n)^{-\top},\Grad_s\Y)_\B-\c(\llambda^{n+1},\Y)=0&&
\ \forall\Y\in\Hub
     \label{eq:solid_sd}\\
  &\c\left(\mmu,\ucXn-\partial_t\w^{n+1}\right)
   =0 &&\ \forall\mmu\in \LL
     \label{eq:vincolo1_sd}\\
& \X^{n+1}=\s+\w^{n+1}\quad\text{\rm in }\B\\
  &\u^0=\u_0\quad\mbox{\rm in }\Omega,\qquad\X^0=\X_0\quad\mbox{\rm in }\B.
     \label{eq:ci_sd}
\end{alignat}
\end{subequations}
In~\eqref{eq:solid_sd}, $V_n(\X,\Y)$ indicates that the Jacobian $J$
and the deformation gradient $\F$ appearing in the definition of $V$
(see~\eqref{eq:defV}) are computed at time $t_n$, moreover, using the
definition of the first Piola--Kirhhoff stress
tensor~\eqref{eq:first-piola-kirchhoff} we set
$\displaystyle {\Pe}^{n+1}=\frac{\partial W}{\partial\F}(\F^{n+1})$
where different linearizations can be obtained according to the
specific hyper-elastic model in use.
\end{problem}
In~\eqref{eq:NS1_sd} we have $b(\u,\v,\w)=\rho_f(\u\cdot\Grad\v,\w)$.

Problem~\ref{pb:semidiscrete} can be written in operator matrix form as
follows:
\[
\aligned
&\left[
\begin{array}{cc|c|c}
{\Mf}/{\dt}+\Af^n      & \Bf^\top     & 0                & {\Cf^n}^\top  \\
\Bf                    & M_p^n        & \Bs^n            & 0\\
\hline
0                      & {\Bs^n}^\top/\dt   
& {\Ms^n}/{\dt^2}+{\As^{v,n}}/{\dt}+\As^{e,n}  & -\Cs^\top \\
\hline
\Cf^n                  & 0        & -{\Cs}/{\dt} & 0    \\
\end{array}
\right]
\left[\begin{array}{c}
\u^{n+1}\\p^{n+1}\\
\hline
\w^{n+1}\\
\hline
\llambda^{n+1}
\end{array}\right]\\
&\qquad
=\left[\begin{array}{c}
{\Mf}\u^n/{\dt}+\f^{n+1}+\ttau_g^{n+1+\ttau_g^{n+1}}\\
{\Bs^n}^\top\w^n/\dt\\
\hline
{\Ms^n}(2\w^{n}-\w^{n-1})/{\dt^2}+{\As^v}\w^n/{\dt}\\
\hline
{\Cs}\w^n/{\dt}
\end{array}\right]
\endaligned
\]
where
\[
\aligned
&\langle\Mf\u,\v\rangle=(\u,\v),\qquad\langle\Ms^n\w,\z\rangle=(\dr^n\w,\z)_\B,
\qquad\langle M_p^n p,q\rangle=\frac1\kappa(J^n p,q)_\B,\\
&\langle\Af^n\u,\v\rangle=b(\u^n,\u,\v)+a(\u,\v),\\
&\langle\As^{v,n}\w,\z\rangle=V_n(\w,\z),
\qquad \langle\As^{e,n}\w,\z\rangle=
(2\Grad_s\w\cdot\frac{\partial W}{\partial\C}(\F^n),\z)_\B,\\
&\langle\Bf\v,q\rangle=-(\div\v,q),\qquad
\langle\Bs^n\z,q\rangle=(J^n\qcXn(\F^n)^{-\top},\Grad_s\z)_\B,\\
&\langle\Cf^n\v,\mmu\rangle=\c(\mmu,\v(\X^n)),\qquad
\langle\Cs\w,\mmu\rangle=\c(\mmu,\w).
\endaligned
\]
We have used the second Piola-Kirchhoff stress tensor to define the operator
associated to the elastic stress tensor in the solid and $\C=\F^\top\F$.
\section{Space-time discretization}
In this section we introduce the finite element spaces needed for the space-time
discretization of Problem~\ref{pb:1}.
For this we consider two independent meshes in $\Omega$ and in $\B$.
We use a stable pair $\Vh\times\Qh$ of finite elements to discretize fluid
velocity and pressure. We denote by $h$ the maximum  edge size.
For example, we can take a mesh made of simplexes and use 
the Hood--Taylor element of lowest degree or we can subdivide the domain
$\Omega$ in parallelepipeds and apply the $Q_2-P_1$ element. The main
difference among the above finite elements consists in the fact that the
pressure for the Hood--Taylor element is continuous while it is discontinuous in
the $Q_2-P_1$ case. It is well known that discontinuous pressure approximation
enjoy better local mass conservation; possible strategies to improve mass
conservation are presented in~\cite{bcgg2012}.
In the solid, we take a regular mesh of simplexes, where $h_s$ stands for the
maximum edge size and denote by $\Sh$ the finite element space containing
piece wise polynomial continuous functions.
Finally, the finite element space $\Lh$ for the Lagrange multiplier $\llambda$
coincides with $\Sh$.
\begin{problem} 
\label{pb:discrete}
Given $\u_{0,h}\in\Vh$ and $\X_{0,h}\in\Sh$, for $n=1,\dots,N$
find $(\u_h^n,p_h^n)\in\Vh\times\Qh$,$\w_h^n\in\Sh$,
and $\llambda_h^n\in\Lh$ such that it holds
\begin{subequations}
\begin{alignat}{2}
  &\rho_f\Big(\partial_t\u_h^{n+1},\v)+b(\u_h^n,\u_h^{n+1},\v)
+a(\u_h^{n+1},\v)\ \notag&&\\
  &\quad-(\div\v,\phcXn)+\c(\llambda_h^{n+1},\vcXn)\notag&&\\
  &\quad=(\f^{n+1},\v)+(\ttau_g^{n+1},\v)_{\partial\Omega_N}
   &&\ \forall\v\in\Vh
     \label{eq:NS1_h}\\
  &-(\div\u_h^{n+1},q)
+(J^n\,\qcXn(\F^n)^{-\top},\Grad_s\partial_t\w_h^{n+1})_\B
\notag&&\\
&\quad-\frac1\kappa (J^n\phcXn, q)_\B=0&&\ \forall q\in\Qh
     \label{eq:NS2_h}\\
  &\left(\dr^n\partial_{tt}\w^{n+1},\Y\right)_{\B}
+({\Pe}_h^{n+1},\Grad_s\Y)_{\B}+V_{h,n}(\partial_t\w_h^{n+1},\Y)
\ \notag&&\\
&\quad
+(J^n\phcXn(\F^n)^{-\top},\Grad_s\Y)_\B-\c(\llambda_h^{n+1},\Y)=0&&
\ \forall\Y\in\Sh
     \label{eq:solid_h}\\
  &\c\left(\mmu,\uhcXn-\partial_t\w_h^{n+1}\right)
   =0 &&\ \forall\mmu\in \Lh
     \label{eq:vincolo1_h}\\
& \X_h^{n+1}=\s+\w_h^{n+1}\quad\text{\rm in }\B\\
  &\u^0=\u_{0,h}\quad\mbox{\rm in }\Omega,
\qquad\X^0=\X_{0,h}\quad\mbox{\rm in }\B.
     \label{eq:ci_h}
\end{alignat}
\end{subequations}
\end{problem}

\section{Numerical validation}

We have implemented the model described in the previous sections in a
custom C++ code, based on the open source finite element software
library {\tt deal.{I}{I}}~\cite{ArndtBangerthDavydov-2017-a}, and on
a modification of the code presented
in~\cite{HeltaiRoyCostanzo-2014-a}.

We use the classic inf-sup stable pair of finite elements $Q2-P1$, for
the velocity and pressure in the fluid part, and standard $Q2$
elements for the solid part. 

In the following test cases the solid is modeled as a compressible
neo-Hookean material, and the constitutive response function for the
first Piola--Kirchhoff stress of the solid is given as
\begin{equation} \label{eqn:FirstPKStress-CompSolid}
\Pe=\mu^e \left(\F- J^{-2 \nu/(1-2\nu)} \F^{-\mathrm{T}} \right),
\end{equation}
where $\mu^e$ is the shear modulus and $\nu$ is the Poisson's ratio for
the solid.

The tests are designed to validate the correct handling of the
coupling between incompressible fluids and compressible solids. 

\subsection{Recovery and rise of an initially compressed disk in a stationary fluid}
\label{sec:cmame-ball}
\begin{figure}[htbp]
\begin{center}
\includegraphics[scale=1.0]{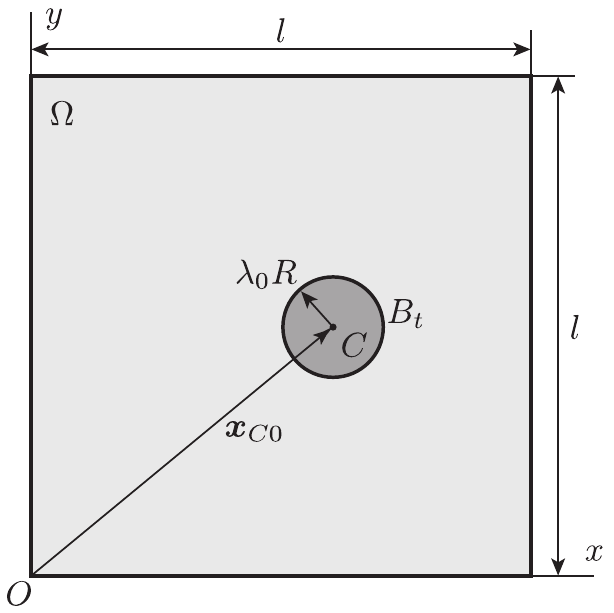}
\caption{Initial configuration of the system with the compressible disk.}
\label{fig:cmame-ball-geometry}
\end{center}
\end{figure}

This test case has been presented initially in Heltai and
Costanzo~\cite{HeltaiCostanzo-2012-a} and simulates the motion of a
compressible, viscoelastic disk having an undeformed radius $R$. The
disk is initially squeezed (i.e. initial dimension of disk is
$\lambda_{0}R$ with $\lambda_{0}<1$), and left to recover in a square
control volume of edge length $L$ that is filled with an initially
stationary, viscous fluid (see
Fig.~\ref{fig:cmame-ball-geometry}). The referential mass density of
the disk is less than that of the surrounding fluid,
i.e. $\rho_{\s_{0}}< \rho_{\f}$. The bottom and the sides of the
control volume have homogeneous Dirichlet boundary condition, while
the top side has homogeneous Neumann boundary conditions. This ensures
that fluid can freely enter and exit the control volume along the top
edge. As the disk tries to recover its undeformed state, it expands
and causes the flux to exit the control volume. Thus the change in the
area of the disk from its initial state, over a certain interval of
time, matches the amount of fluid efflux from the control volume over
the same interval of time. We use this idea to estimate the error in
our numerical method.

In this test, the following parameters have been used:
$R=\np[m]{0.125}$, $l=\np[m]{1.0}$,
$\rho_{\s_{0}}=\np[kg/m^{3}]{0.8}$, $\rho_{\f}=\np[kg/m^{3}]{1.0}$,
$\mu^{e}=\np[Pa]{20}$, $\mu_{\s}= \np[Pa \! \cdot \! s]{2.0}$,
$\mu_{\f}= \np[Pa \! \cdot \! s]{0.01}$, $\lambda_{0}=0.7$. The body
force on the system is $\b=\np[m/s^{2}]{(0,-10)}$. The initial
location of the center of the disk is
$\x_{C0}=\np[m]{(0.6, 0.4)}$. We have used $Q2-P1$ elements for
the control volume and the mesh comprises 1024 cells and 11522
DoFs. $Q2$ elements have been used for disk and the mesh comprises 224
cells with 1894 DoFs.

In
Figs.~\ref{subfig:cmame-ball-visit-dt0}--\ref{subfig:cmame-ball-visit-dt300},
we can see the velocity field over the entire control volume due to
the motion of the disk as well as the pressure in the fluid for
several instants of time spanning the duration of the simulation. The
initial deformation of the disk is such that the density of the disk
is greater than that of the fluid. But as soon as the disk is
released, it starts to expand while remaining at almost the same
vertical height, instead of descending, as can be seen from
Fig.~\ref{fig:cmame-ball-vertical-posn}. This causes a surge of fluid
outflow that as shown in Fig.~\ref{fig:cmame-ball-flux}. The expansion
of the disk results in an increased buoyancy on the disk that begins
to rise through the fluid. As the solid rises the hydrostatic pressure
from the fluid decreases and hence it grows further (see
Fig.~\ref{fig:cmame-ball-volume-change}) till it reaches the top of
the domain. The amount of fluid ejected from the control volume due to
expansion of the disk does not quite keep pace with this expansion
(see Fig.~\ref{fig:cmame-ball-volume-change}). The difference in these
two amounts is a measure of the error in our numerical method (see
Fig.~\ref{fig:cmame-ball-volume-exchange-error}).

\begin{figure}[htbp]
	\begin{center}
	\subfigure[$t={\np[s]{0}}$]
	{\includegraphics[width=0.48\textwidth]
	{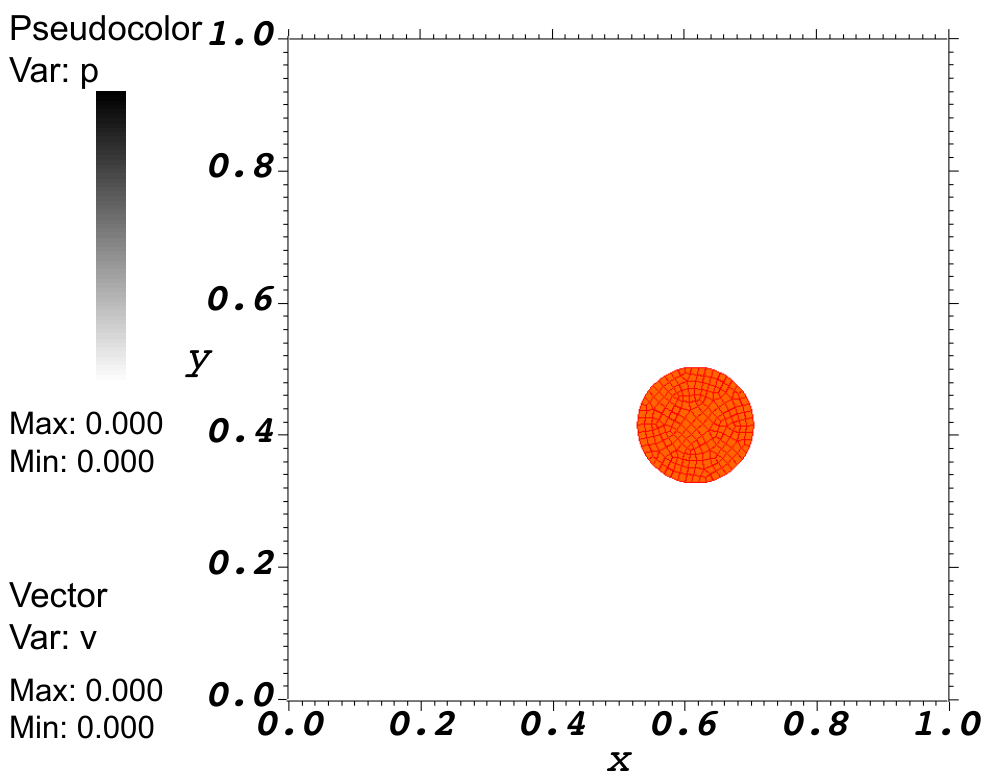}
	\label{subfig:cmame-ball-visit-dt0}
	}
	\subfigure[$t={\np[s]{0.01}}$]
	{\includegraphics[width=0.48\textwidth]
	{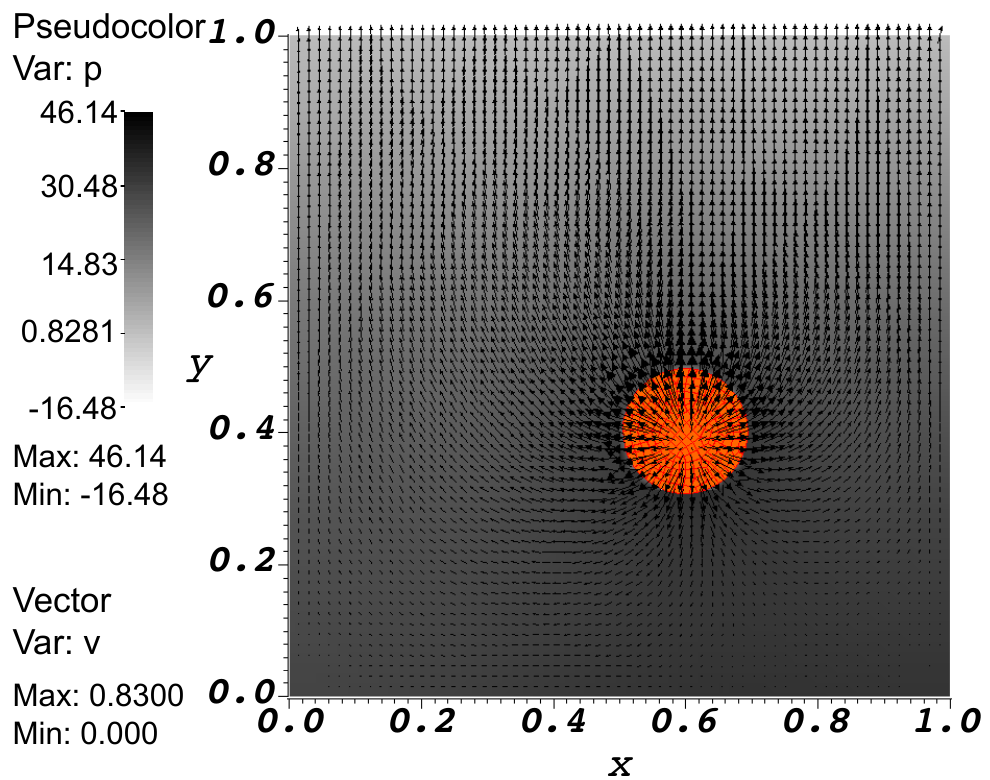}
	\label{subfig:cmame-ball-visit-dt1}
	}
	\subfigure[$t={\np[s]{0.5}}$]
	{\includegraphics[width=0.48\textwidth]
	{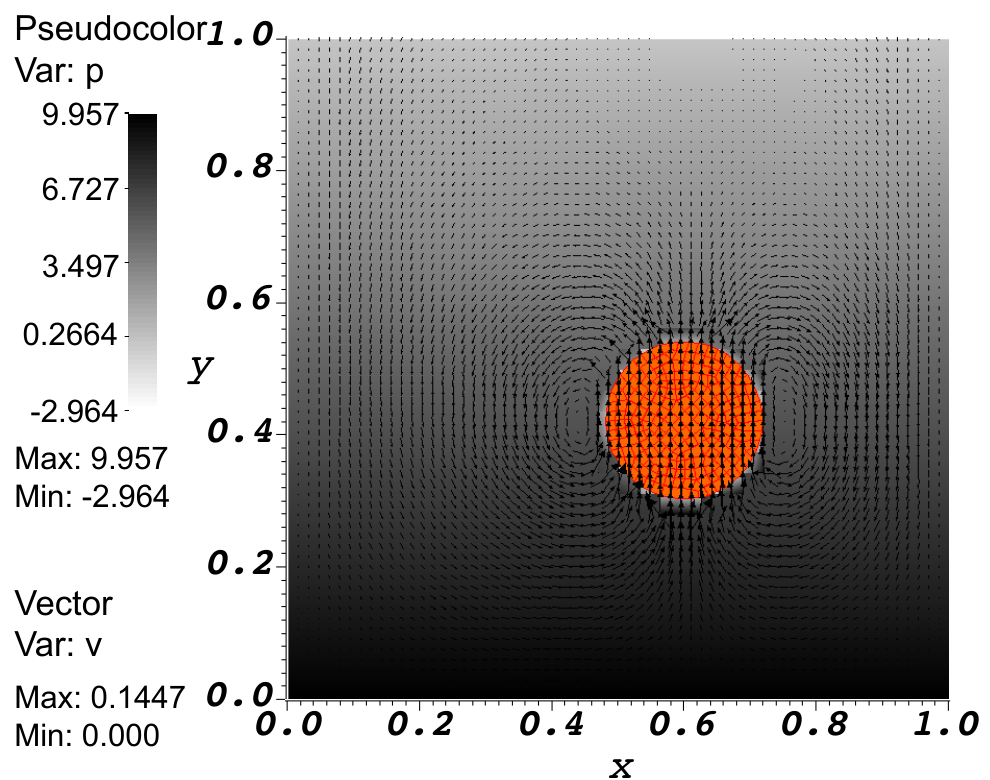}
	\label{subfig:cmame-ball-visit-dt50}
	}
	\subfigure[$t={\np[s]{1.0}}$]
	{\includegraphics[width=0.48\textwidth]
	{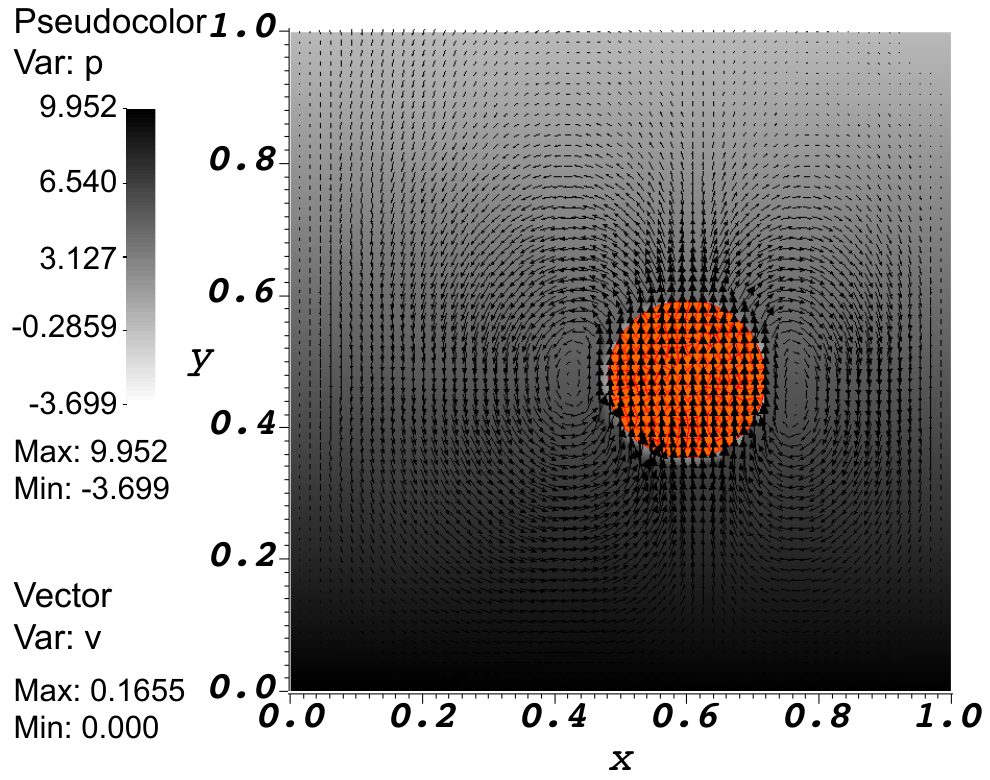}
	\label{subfig:cmame-ball-visit-dt100}
	}
	\subfigure[$t={\np[s]{2.0}}$]
	{\includegraphics[width=0.48\textwidth]
	{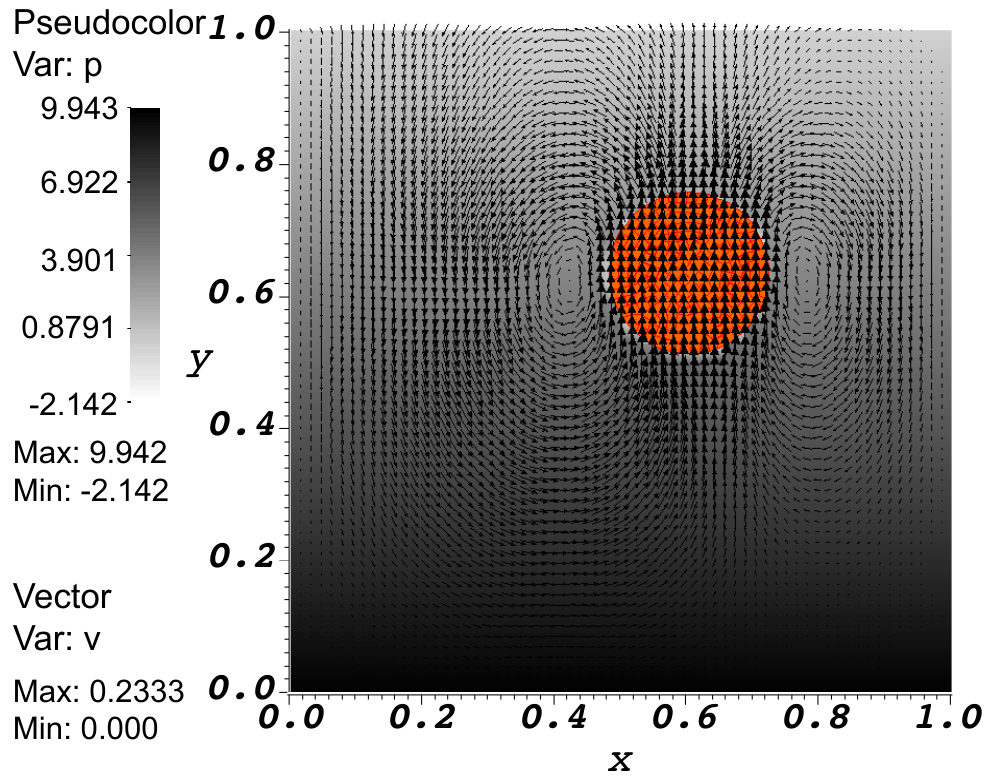}
	\label{subfig:cmame-ball-visit-dt200}
	}
	\subfigure[$t={\np[s]{3.0}}$]
	{\includegraphics[width=0.48\textwidth]
	{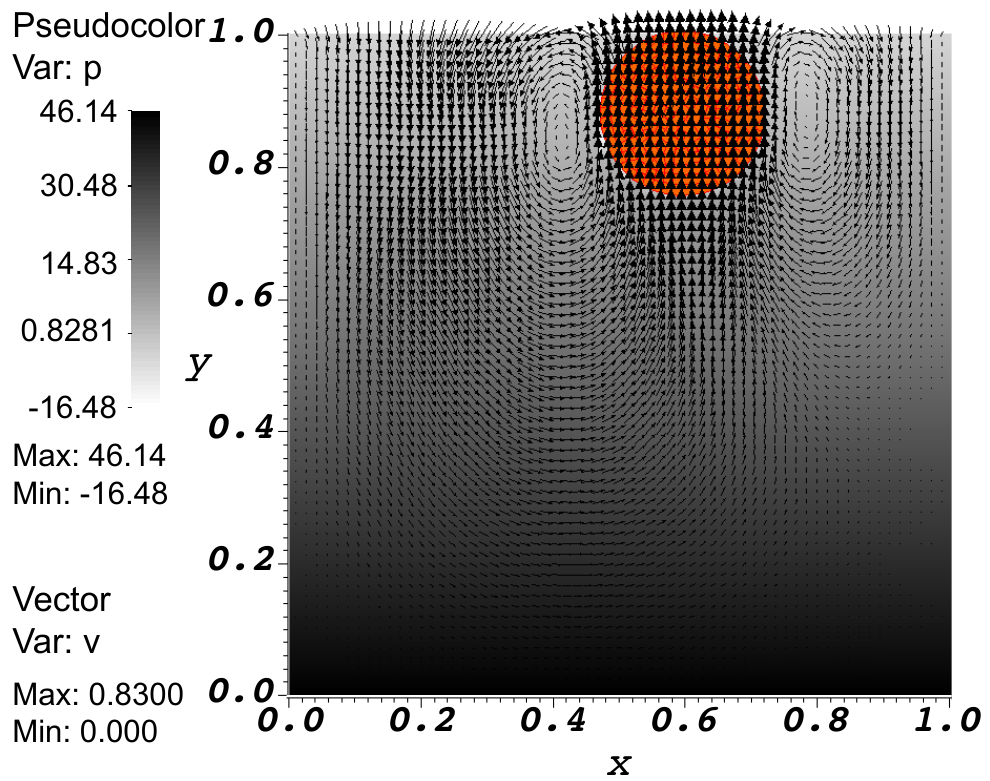}
	\label{subfig:cmame-ball-visit-dt300}
	}
	\caption{The velocity field over the entire control volume, the pressure in the fluid and the mesh of the disk.}
	\label{fig:cmame-ball-visitplots}
	\end{center}
\end{figure}

\begin{figure}[htbp]
\begin{center}
\subfigure
{\includegraphics[width=0.48\textwidth, trim=2.2in 1in 0.9in 0.8in, clip=true]{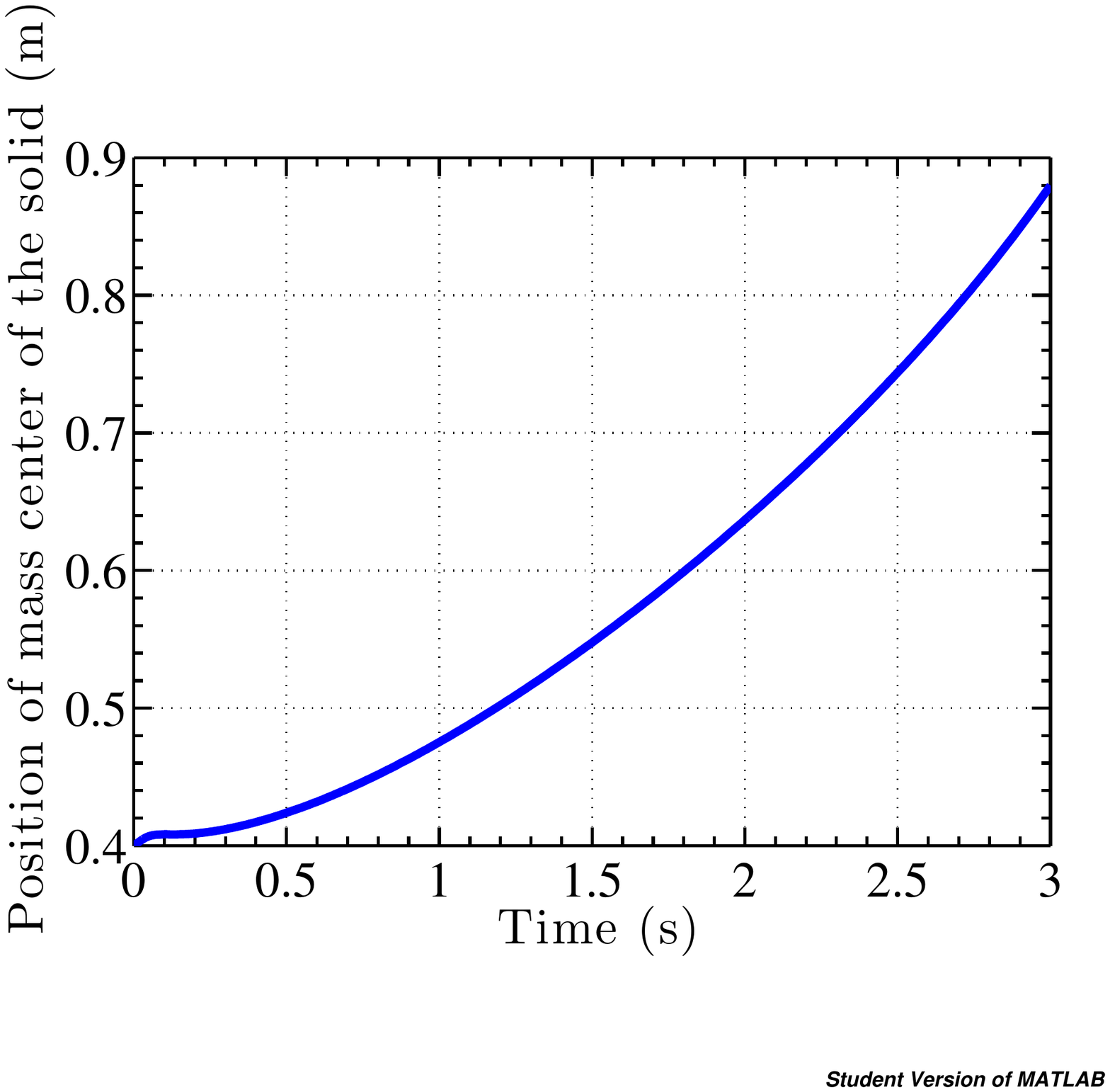}
\label{fig:cmame-ball-vertical-posn}
}
\subfigure
{\includegraphics[width=0.48\textwidth, trim=1in 2.8in 0.9in 3.2in, clip=true]{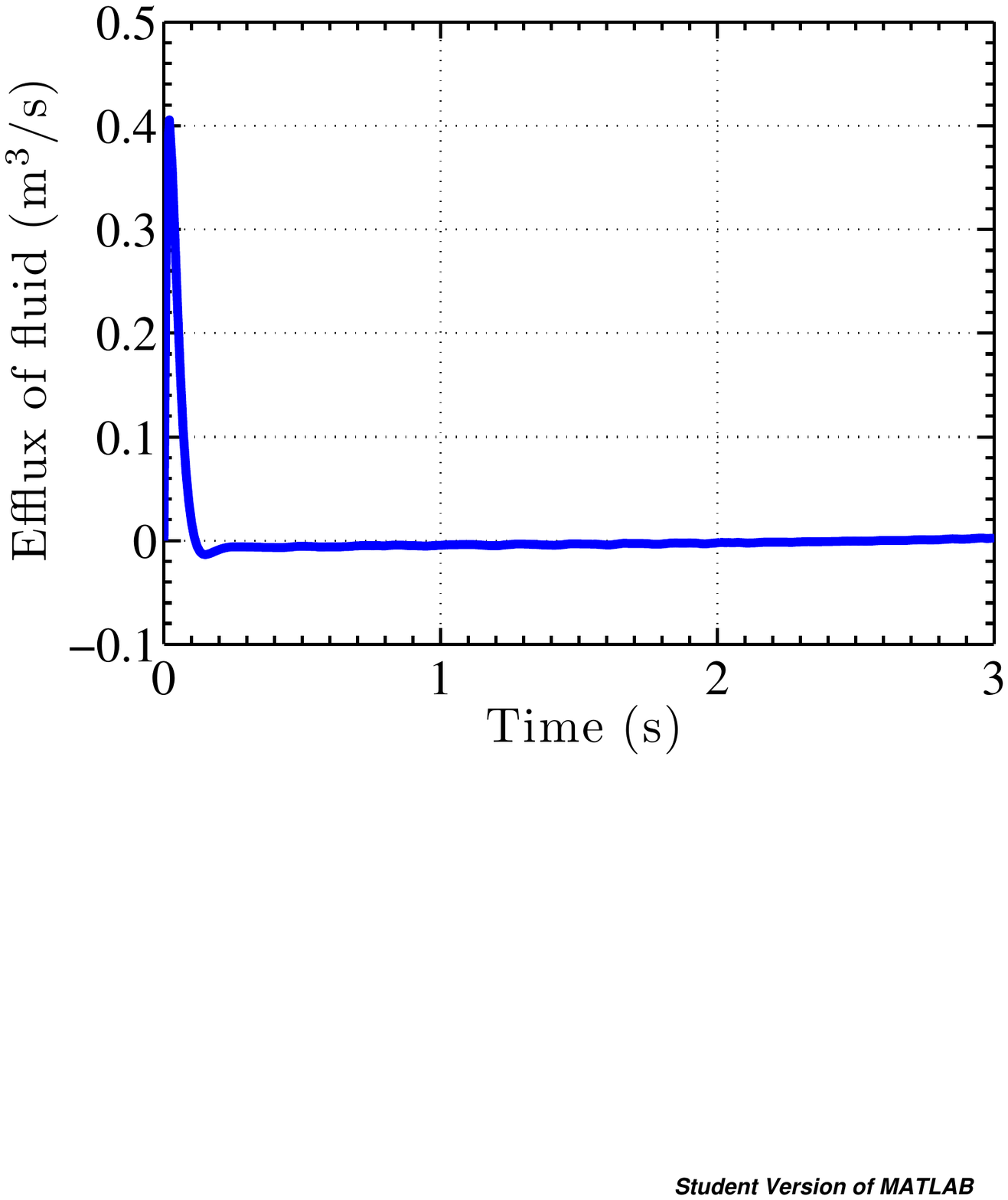}
\label{fig:cmame-ball-flux}
}
\caption{Instantaneous vertical position of the disk and flux of the fluid.}
\label{fig:cmame-ball-verticalposn-flux}
\end{center}
\end{figure}

\begin{figure}[htbp]
\begin{center}
\subfigure
{\includegraphics[width=0.47\textwidth, trim=0.8in 2.8in 0.9in 3.2in, clip=true]{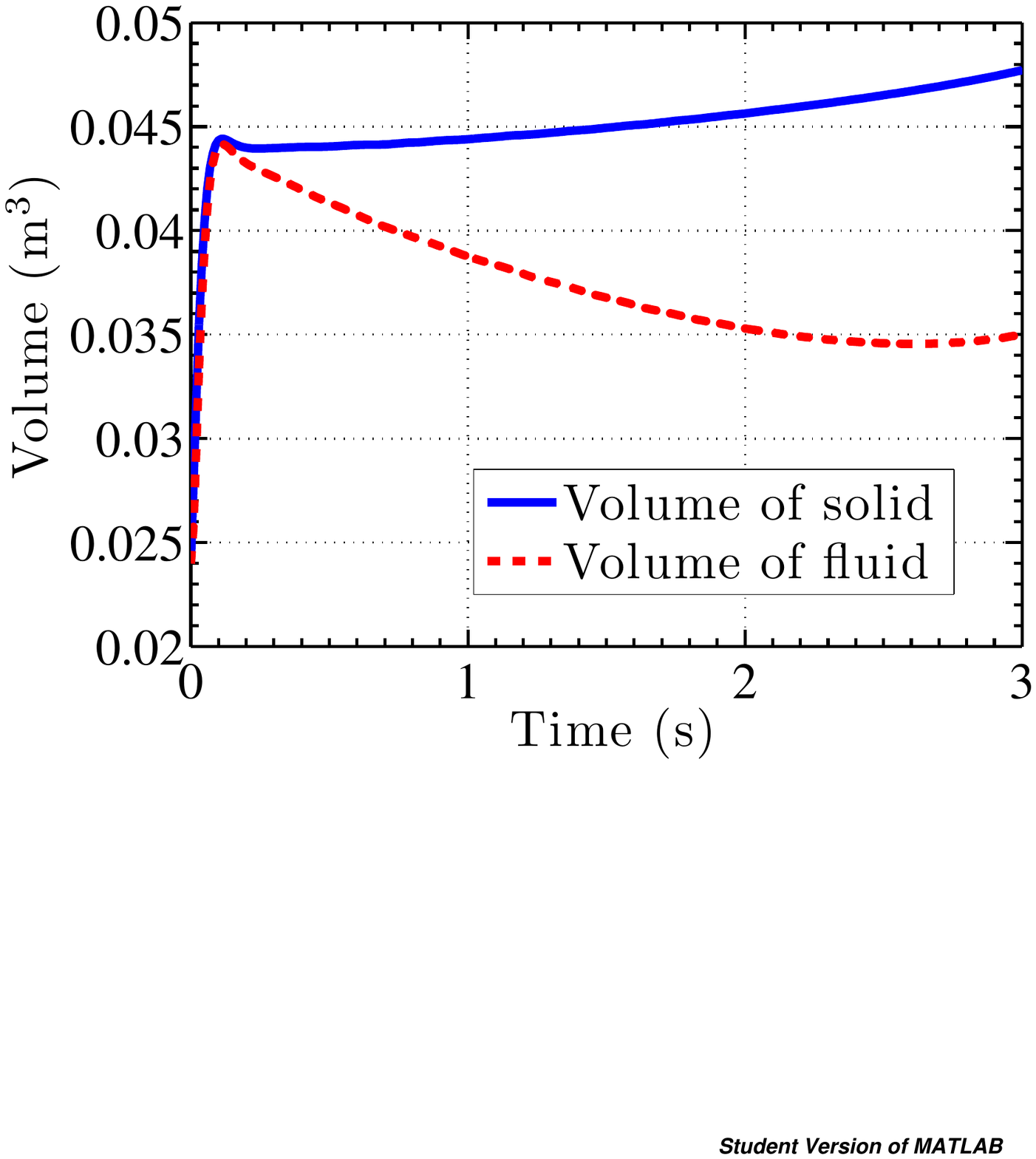}
\label{fig:cmame-ball-volume-change}
}
\subfigure
{\includegraphics[width=0.48\textwidth, trim=0.1in 2.6in 0.5in 2.9in, clip=true]{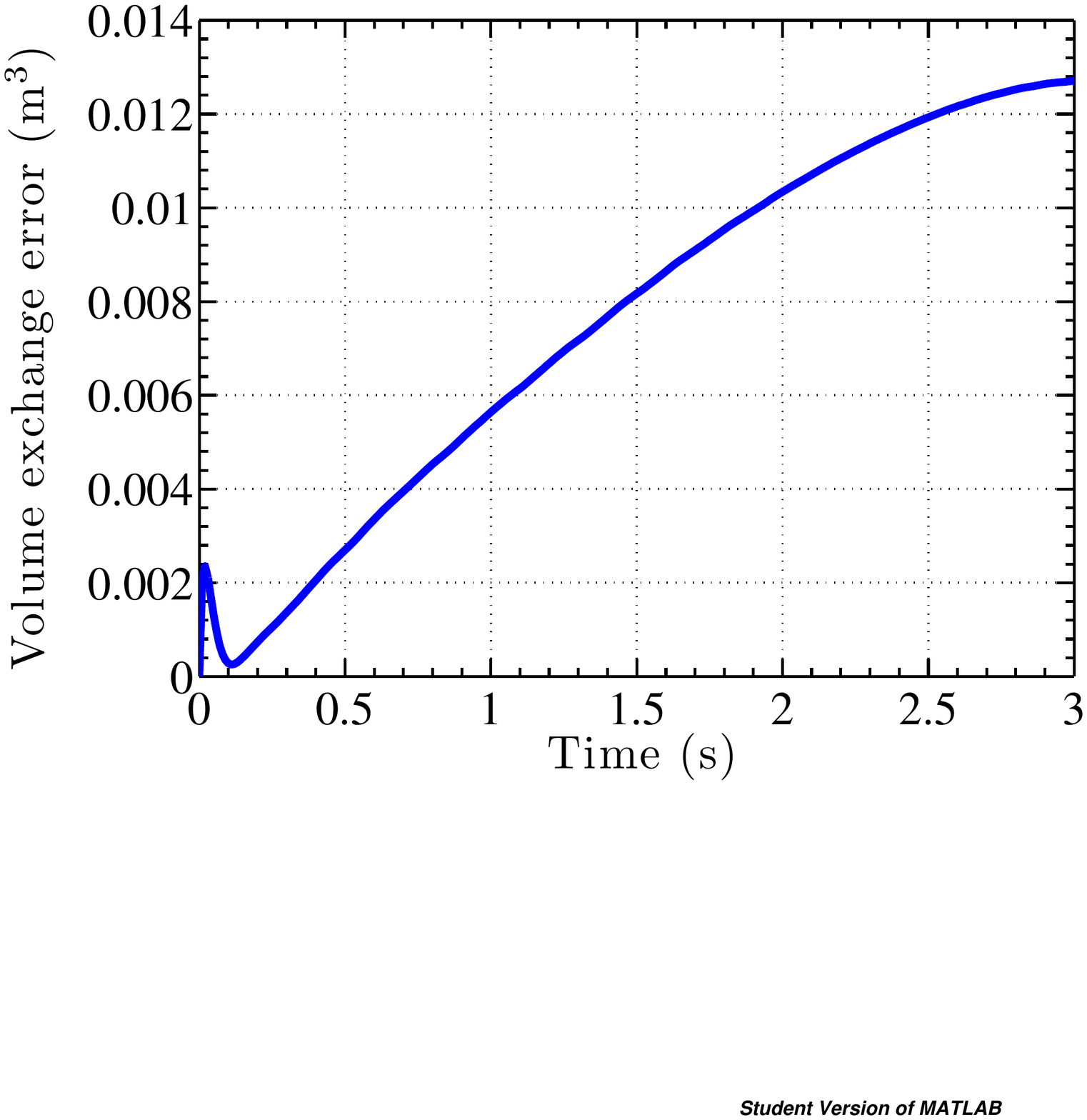} 
\label{fig:cmame-ball-volume-exchange-error}
}
\caption{Instantaneous area change of the disk, amount of fluid ejected from the control volume and the difference between them as an estimate of the error in our numerical implementation.}
\label{fig:cmame-ball-error-estimate}
\end{center}
\end{figure}

\subsection{Deformation of a compressible annulus under the action of point source of fluid}

\begin{figure}[htbp]
	\begin{center}
		\includegraphics{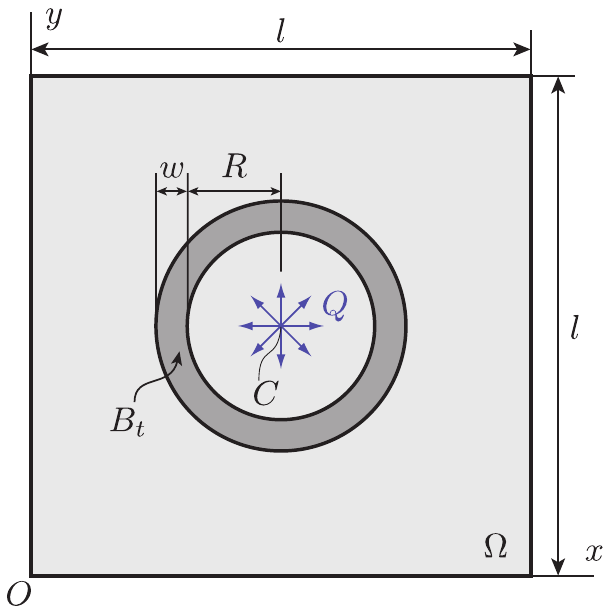}
		\caption{Initial configuration of an annulus immersed
                  in a square box filled with fluid. At the center $C$
                  of the box there is a point source of strength $Q$.}
		\label{fig:PointSource-Annulus-Geometry}
	\end{center}
\end{figure}

This test was first proposed by Roy~\cite{Roy-2012-a}, as a toy model
to describe the behavior of hydrocephalus in the brain. In this test
we observe the deformation of a hollow cylinder, submerged in a fluid
contained in a rigid prismatic box, due to the influx of fluid along
the axis of the cylinder. In the two-dimensional context the test
comprises an annular solid with inner radius $R$ and thickness $w$
that is concentric with a fluid-filled square box of edge length $l$
(see, Fig.~\ref{fig:PointSource-Annulus-Geometry}). A point mass
source of fluid of strength $Q$ is located at $\x_{C}$ which
corresponds to the center $C$ of the control volume.  The radially
symmetric nature of point source ensures that momentum balance law
remains unaltered. However, we need to modify the balance of mass to
account for the mass influx from the point source by adding the
following term to the pressure equation:
\begin{equation} \label{eqn:WeakBalOfMass-PtSource}
\dfrac{Q}{\rho_{\f}}\delta \left(\x-\x_{C}\right) 
\end{equation}
For this test we have used a single point source whose strength is a
constant, and all boundary conditions on the control volume are of
homogeneous Dirichlet type.The solid is compressible and hence volume
of solid and thereby the volume of fluid in the control volume can
change. The homogeneous Dirichlet boundary condition implies that the
fluid cannot leave the control volume and hence the amount of fluid
that accumulates in the control volume due to the point source must
equate the decrease in volume of the solid. The difference in these
two volumes can serve as an estimate of the numerical error incurred.
\begin{table}[htbp]\small
\caption{Number of cells and DoFs used in the different simulations involving the deformation of a compressible annulus under the action of a point source.}
\begin{center}
\begin{tabular*} {\textwidth} {@{\extracolsep{\fill}} c c c c c}
\toprule
&
\multicolumn{2}{c}{Solid}&
\multicolumn{2}{c}{Control Volume} 
\\
\cmidrule(r){2-3}\cmidrule(r){4-5}
&
Cells &
DoFs &
Cells &
DoFs
\\
\midrule
Level 1&  6240&   50960&  1024&   9539 \\
Level 2& 24960&  201760&  4096&  37507 \\
Level 3& 99840&  802880& 16384& 148739 \\
\bottomrule
\end{tabular*}
\end{center}
\label{tab:mesh-refinement-pts-comp-solid}
\end{table}

We have used the following parameters for this test: $R =\np[m]{0.25}$, $w=\np[m]{0.05}$, $l=\np[m]{1.0}$, $\rho_{\f}= \rho_{\s_{0}}= \np[kg/m^{3}]{1}$, $\mu_{\f}= \mu_{\s}= \np[Pa \! \cdot \! s]{1}$, $\mu^{e}=\np[Pa]{1}$, $\nu=0.3$, $Q=\np[kg/s]{0.1}$ and $dt=\np[s]{0.01}$. We have tested for three different mesh refinement levels whose details have been listed in Table~\ref{tab:mesh-refinement-pts-comp-solid}.
\begin{figure}[htbp]
\begin{center}
\subfigure[$t={\np[s]{0}}$]
{\includegraphics[width=0.48\textwidth]{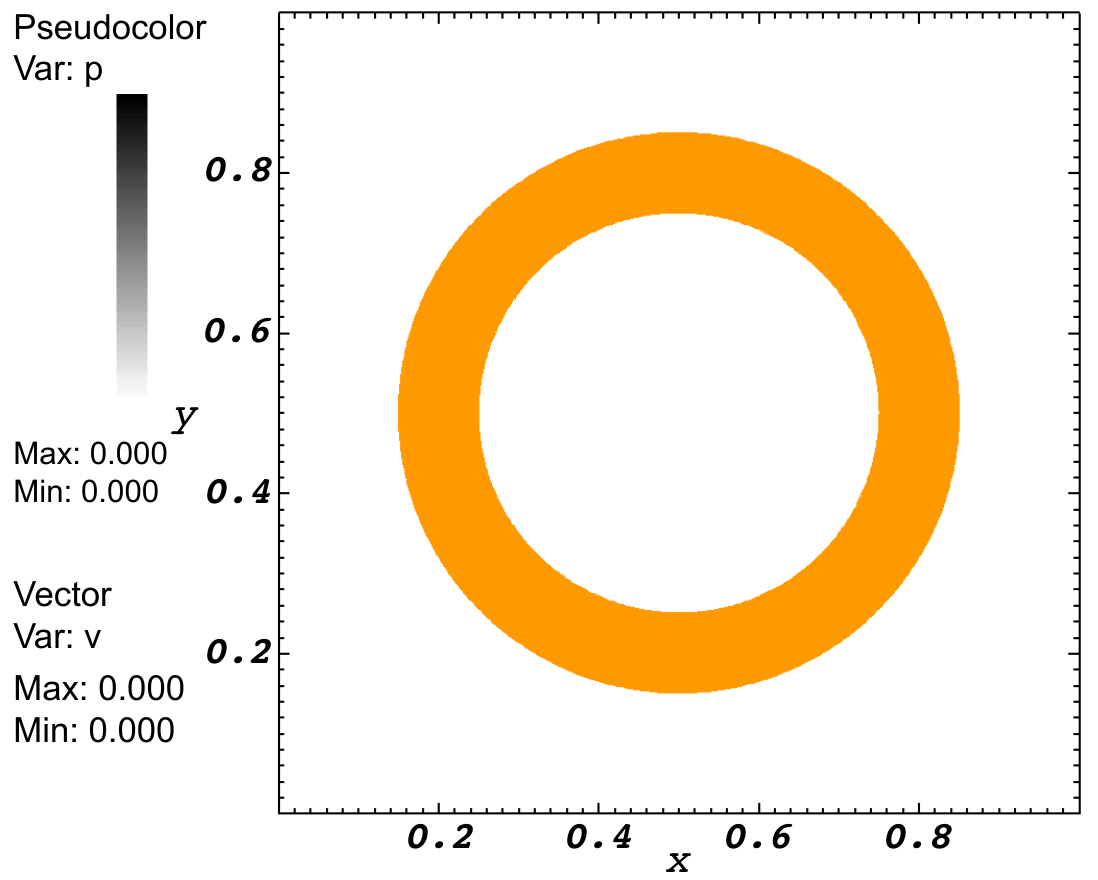}
\label{subfig:pts-comp-solid-t0}
}
\subfigure[$t={\np[s]{1.0}}$]
{\includegraphics[width=0.48\textwidth]{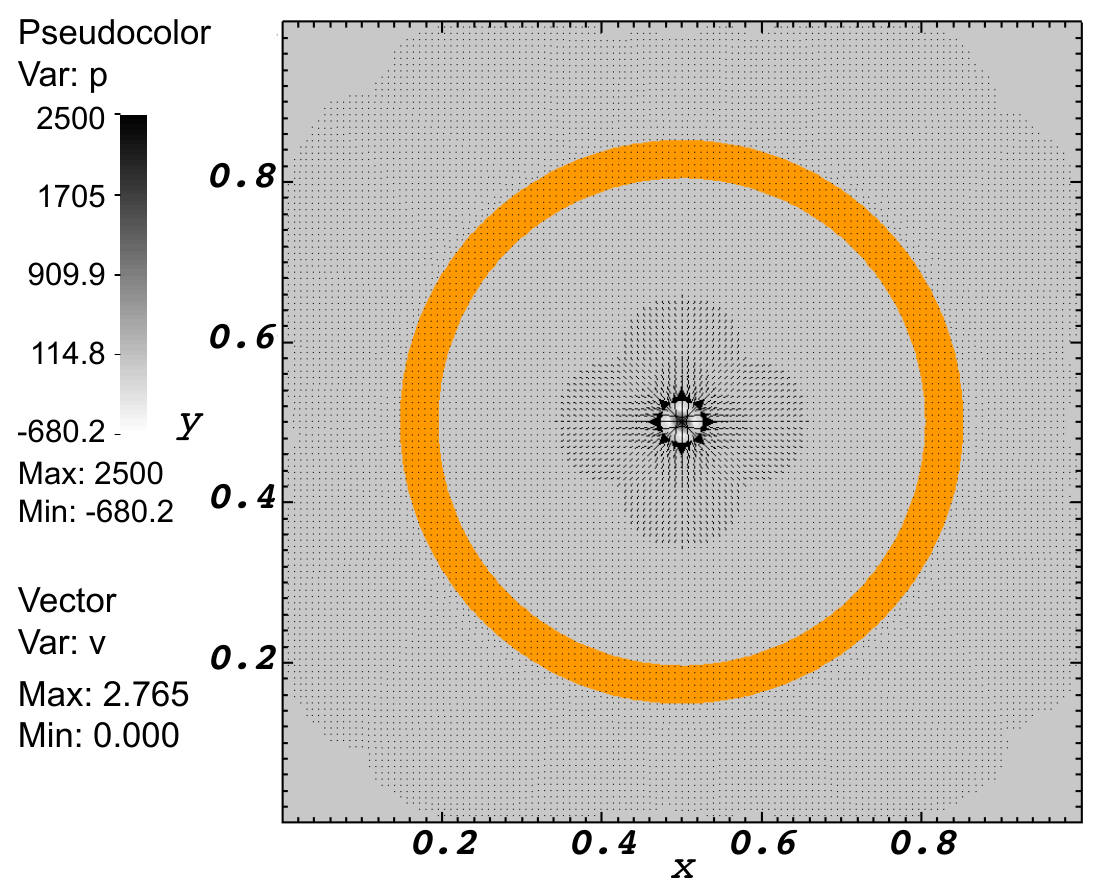}
\label{subfig:pts-comp-solid-t1}
}
\caption{The velocity and the mean normal stress field over the control volume. Also shown is the annulus mesh.}
\label{default}
\end{center}
\end{figure}

The initial state of the system is shown in Fig.~\ref{subfig:pts-comp-solid-t0}. As time progresses, the fluid entering the control volume deforms and compresses the annulus as shown in Fig.~\ref{subfig:pts-comp-solid-t1} for $t=\np[s]{1}$. When we look at the difference in the instantaneous amount of fluid entering the control volume and the decrease in the volume of the solid, we see that the difference increases over time (see Fig.~\ref{fig:pts-comp-solid-error-estimate}). This is not surprising since the mesh of the solid becomes progressively distorted as the fluid emanating from the point source push the inner boundary of the annulus. The error significantly reduces with the increase in the refinements of the fluid and the solid meshes. 
\begin{figure}[htbp]
\begin{center}
\subfigure
{\includegraphics[width=0.48\textwidth, trim = 0.5in 2.6in 0.9in 2.5in, clip=true]{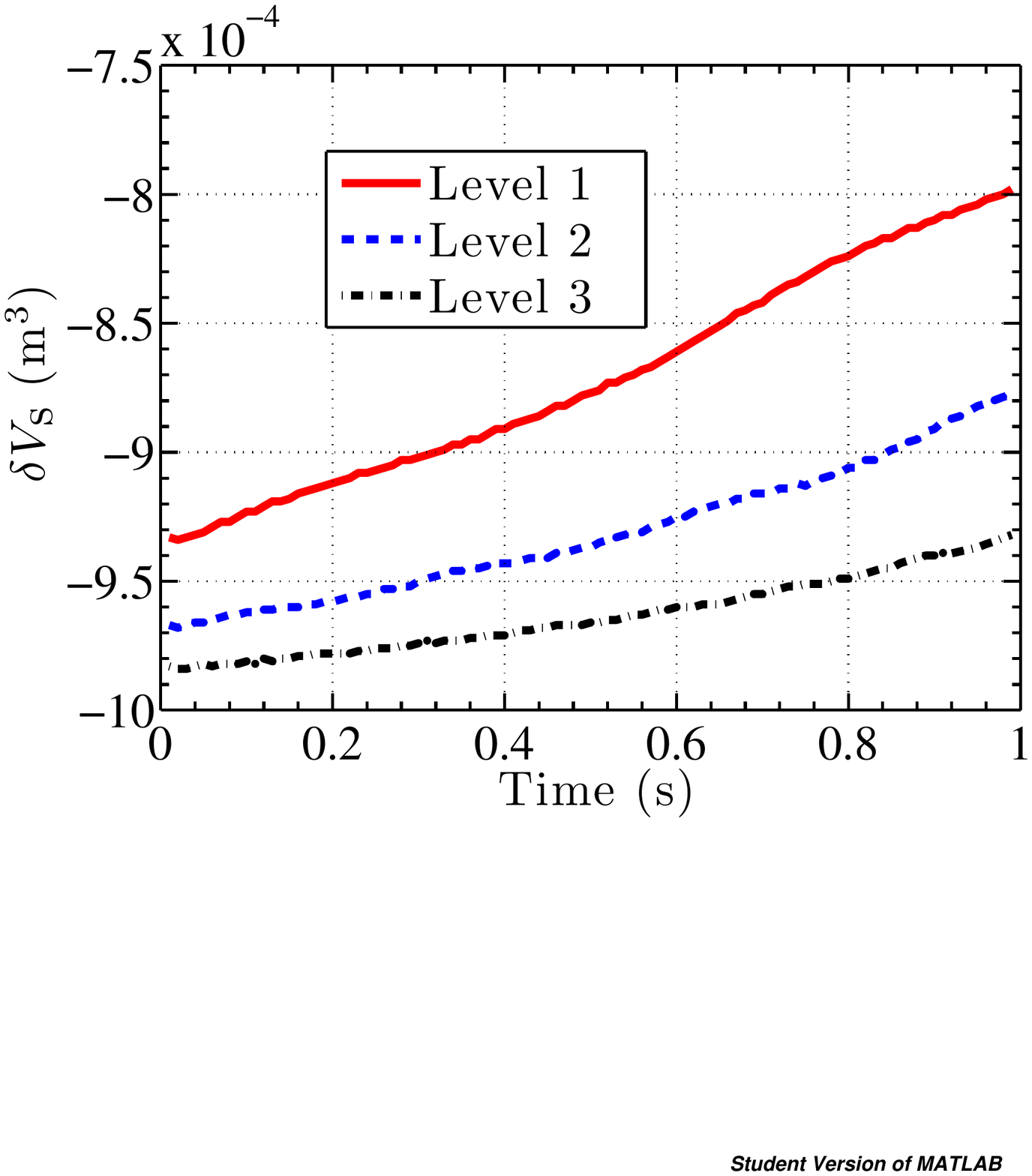}
\label{subfig:pts-comp-solid-error-estimate-1}
}
\subfigure
{\includegraphics[width=0.47\textwidth, trim = 0.6in 2.6in 0.9in 2.5in, clip=true]{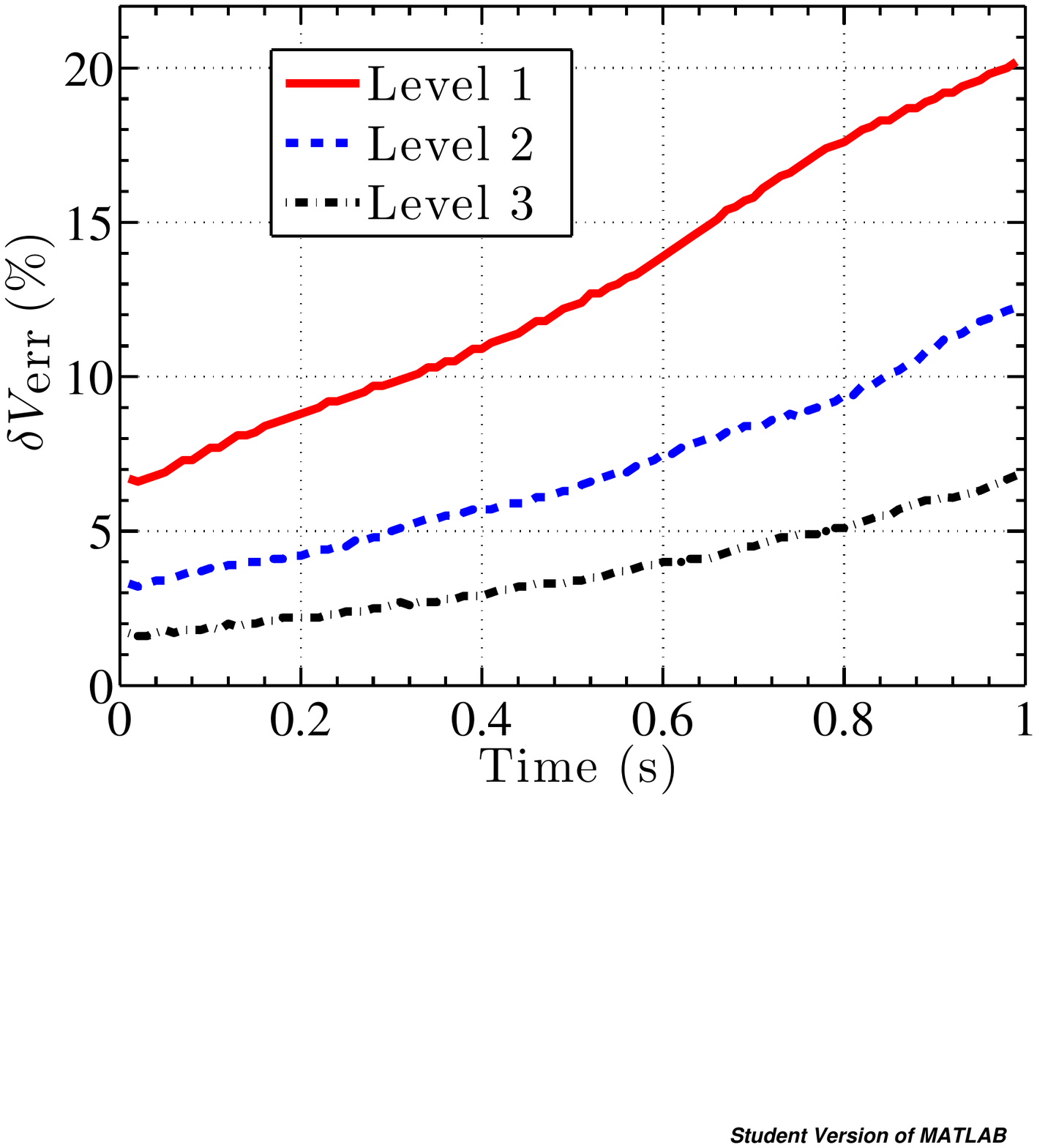}
\label{subfig:pts-comp-solid-error-estimate-2}
}
\caption{The difference between the instantaneous amount of fluid entering due to the source and the change in the area of the annulus. The difference reduces with mesh refinement.}
\label{fig:pts-comp-solid-error-estimate}
\end{center}
\end{figure}

\section{Conclusions}
\label{sec:conclusions}

The Finite Element Immersed Boundary Method (FEIBM) is a well
established formulation for fluid structure interaction problems of
general types. In most implementations (see, for example,
~\cite{BoffiGastaldiHeltaiPeskin-2008-a}), the solid constitutive
behavior is constrained to be viscous (with the same viscosity of the
surrounding fluid) and incompressible.

The first attempt to allow for solids of arbitrary constitutive type
was presented in~\cite{HeltaiCostanzo-2012-a}, whose formulation is
applicable to problems with immersed bodies of general topological and
constitutive characteristics. Such formulation, however, did not
expose the intrinsic structure of the underlying problem. 

In this work we have shows how the incompressible version of the FSI
model presented in~\cite{HeltaiCostanzo-2012-a} can be seen as a
special case of the Distributed Lagrange Multiplier method, introduced
in \cite{BoffiCavalliniGastaldi-2015-a}, and we presented a novel
distributed Lagrange multiplier method that generalizes the
compressible model introduced in~\cite{HeltaiCostanzo-2012-a}.

Two validation tests are presented, to demonstrate the capability of
the model to take into account complex fluid structure interaction
problems between compressible solids and incompressible fluids.

\section*{Acknowledgments}

This work has been partly supported by IMATI/CNR and GNCS/INDAM.

\bibliographystyle{plain}
\bibliography{ref}

\begin{thebibliography}{10}

\bibitem{ArndtBangerthDavydov-2017-a}
D.~Arndt, W.~Bangerth, D.~Davydov, T.~Heister, L.~Heltai, M.~Kronbichler,
  M.~Maier, J.-P. Pelteret, B.~Turcksin, and D.~Wells.
\newblock {The deal.{II} library, version 8.5}.
\newblock {\em Journal of Numerical Mathematics}, 25(3), jan 2017.

\bibitem{bcgg2012}
D.~Boffi, N.~Cavallini, F.~Gardini, and L.~Gastaldi.
\newblock Local mass conservation of {S}tokes finite elements.
\newblock {\em J. Sci. Comput.}, 52(2):383--400, 2012.

\bibitem{BoffiCavalliniGastaldi-2015-a}
D.~Boffi, N.~Cavallini, and L.~Gastaldi.
\newblock The finite element immersed boundary method with distributed
  {L}agrange multiplier.
\newblock {\em SIAM Journal on Numerical Analysis}, 53(6):2584--2604, 2015.

\bibitem{BoffiGastaldi-2003-a}
D.~Boffi and L.~Gastaldi.
\newblock A finite element approach for the immersed boundary method.
\newblock {\em Computers \& Structures}, 81(8--11):491--501, 2003.

\bibitem{BoffiGastaldiHeltaiPeskin-2008-a}
D.~Boffi, L.~Gastaldi, L.~Heltai, and C.~S. Peskin.
\newblock On the hyper-elastic formulation of the immersed boundary method.
\newblock {\em Computer Methods in Applied Mathematics and Engineering},
  197(25--28):2210--2231, 2008.

\bibitem{Gurtin-1982-a}
M.~E. Gurtin.
\newblock {\em An introduction to continuum mechanics}, volume 158.
\newblock Academic press, 1982.

\bibitem{HeltaiCostanzo-2012-a}
L.~Heltai and F.~Costanzo.
\newblock Variational implementation of immersed finite element methods.
\newblock {\em Comput. Methods Appl. Mech. Engrg.}, 229/232:110--127, 2012.

\bibitem{HeltaiRoyCostanzo-2014-a}
L.~Heltai, S.~Roy, and F.~Costanzo.
\newblock A fully coupled immersed finite element method for fluid structure
  interaction via the {D}eal.{II} library.
\newblock {\em Archive of Numerical Software}, 2(1), 2014.

\bibitem{HughesLiuK.-1981-a}
T.~J.~R. Hughes, W.~K. Liu, and Zimmerman~T. K.
\newblock {L}agrangian-{E}ulerian finite element formulations for
  incompressible viscous flows.
\newblock {\em Computer Methods in Applied Mechanics and Engineering},
  29:329--349, 1981.

\bibitem{Peskin-1977-a}
C.~S. Peskin.
\newblock Numerical analysis of blood flow in the heart.
\newblock {\em Journal of Computational Physics}, 25(3):220--252, 1977.

\bibitem{Peskin-2002-a}
C.~S. Peskin.
\newblock The immersed boundary method.
\newblock {\em Acta Numerica}, 11:479--517, 2002.

\bibitem{Roy-2012-a}
Saswati Roy.
\newblock {\em Numerical simulation using the generalized immersed finite
  element method: an application to hydrocephalus}.
\newblock PhD thesis, The Pennsylvania State University, 2012.

\bibitem{RoyHeltaiCostanzo-2015-a}
Saswati Roy, Luca Heltai, and Francesco Costanzo.
\newblock Benchmarking the immersed finite element method for fluid-structure
  interaction problems.
\newblock {\em Computers and Mathematics with Applications}, 69:1167--1188,
  2015.

\bibitem{TurekHron-2006-a}
S.~Turek and J.~Hron.
\newblock Proposal for numerical benchmarking of fluid-structure interaction
  between an elastic object and laminar incompressible flow.
\newblock In H.-J. Bungartz and M.~Sch{\"a}fer, editors, {\em Fluid-Structure
  Interaction}, volume~53 of {\em Lecture Notes in Computational Science and
  Engineering}, pages 371--385. Springer, Berlin, Heidelberg, 2006.
\newblock {DOI: 10.1007/3-540-34596-5\_15}.

\bibitem{WangLiu-2004-a}
X.~Wang and W.~K. Liu.
\newblock Extended immersed boundary method using {FEM} and {RKPM}.
\newblock {\em Computer Methods in Applied Mechanics and Engineering},
  193(12--14):1305--1321, 2004.

\bibitem{ZhangGerstenbergerWang-2004-a}
L.~Zhang, A.~Gerstenberger, X.~Wang, and W.~K. Liu.
\newblock Immersed finite element method.
\newblock {\em Computer Methods in Applied Mechanics and Engineering},
  193(21--22):2051--2067, 2004.

\end{thebibliography}

\end{document}